\title[]{On Hodge-Riemann Relations \\for Translation-Invariant Valuations} 
\author{Jan Kotrbat{\'y}}\thanks{The author was supported by DFG grant WA3510/1-1 and by FWF Grant P31448-N35.}
\email{kotrbaty@math.uni-frankfurt.de}
\address{Institut für Mathematik, Goethe-Universit\"at Frankfurt, Robert-Mayer-Str. 10, 60629 Frankfurt, Germany}
\subjclass[2020]{52A40, 52B45, 52A39, 52A20}
\date{\today}
\keywords{Convex bodies, Alesker product, Hodge-Riemann relations, Aleksandrov-Fenchel inequality}
\begin{document}

\maketitle

\begin{abstract}
The Alesker product turns the space of smooth translation-invariant valuations on convex bodies into a commutative associative unital algebra, satisfying Poincar\'e duality and the hard Lefschetz theorem. In this article, a version of the Hodge-Riemann relations for the Alesker algebra is conjectured, and the conjecture is proved in two particular situations: for even valuations, and for 1-homogeneous valuations. The latter result is then used to deduce a special case of the Aleksandrov-Fenchel inequality. Finally, mixed versions of the hard Lefschetz theorem and of the Hodge-Riemann relations are conjectured, and it is shown that the Aleksandrov-Fenchel inequality follows from the latter in its full generality.

\end{abstract}

\section{Introduction}

A {\it valuation} is---at least for the purpose of the text that follows---a finitely additive measure on convex bodies in the Euclidean space. Of particular interest is the space $\Val$ of valuations that are continuous and translation invariant; it was conjectured by Peter McMullen \cite{McMullen80} that $\Val$ is {\it just the weak completion of the linear space of mixed volumes}. McMullen's conjecture was proved by Semyon Alesker \cite{Alesker01}, in fact, in much greater generality of what has become known as the {\it irreducibility theorem}. The salient feature of Alesker's result is that it acted as a catalyst for a variety of further developments, among the most important of which is certainly the discovery of a natural product of valuations.

The {\it Alesker product} \cite{Alesker04} is defined on the dense subspace $\Val^\infty\subset\Val$ of smooth valuations (we refer to \S\ref{ss:smooth} for details), turning it into a commutative associative unital algebra that is graded by the degree of homogeneity and by the parity of a valuation. In this connection, let us recall an important result of P. McMullen according to which the degree of homogeneity is a non-negative integer less than or equal the dimension $n$ of the underlying Euclidean space. Remarkably, the Alesker algebra moreover satisfies a {\it Poincar\'e-type duality}, and even a version of the {\it hard Lefschetz theorem} for multiplication by the first intrinsic volume. Dual to the product is another natural multiplicative structure on $\Val^\infty$, the {\it Bernig-Fu convolution} introduced in \cite{BernigFu06}. The product and the convolution are further intertwined by a {\it Fourier-type transform}, which was constructed by S. Alesker in \cite{Alesker03hard,Alesker11}. It is apropos to point out that all these structures behave naturally with respect to the `standard' examples of valuations, such as to the Euler characteristic and the Lebesque measure, or, more generally, to the intrinsic and mixed volumes.

A fundamental connection between the product of valuations and integral geometry was found by Bernig and Fu \cite{Fu06,BernigFu06}: It turns out that the array of classical kinematic formulas for the intrinsic volumes---to which great attention was paid throughout the twentieth century, in particular, in works of Blaschke, Chern, Hadwiger, Federer, or Santal\'o---precisely corresponds to the structure of the (finite-dimensional) subalgebra $\Val^{\SO(n)}\subset\Val^\infty$ of $\SO(n)$-invariant valuations. Furthermore, the same correspondence applies to any closed subgroup $G\subset\SO(n)$ acting transitively on spheres, and can even be extended to a general isotropic space \cite{AleskerBernig12}. Application of this principle resulted in a comprehensive understanding of Hermitian integral geometry \cite{Bernig09,BernigFu11,BernigSolanesFu14,Wannerer14module,Wannerer14}, and led to substantial progress in the other---quaternionic and octonionic---spaces \cite{Bernig11,BernigSolanes17,SolanesWannerer21,KotrbatyPhD}.

\subsection{Main results}

The aim of this article is to explore a new, yet very natural feature of the algebra of smooth valuations. Our results indicate that Alesker's algebraic theory of valuations is even more powerful and encompassing than so far believed. Specifically, we show that not only it explains and extends integral geometry of intrinsic volumes, but it also appears capable of subsuming another main pillar of classical convex geometry---the Aleksandrov inequalities---and thus merging these two seemingly separated topics into a unified concept.

To motivate our work, let us recall a fundamental result from the cohomology theory of compact Kähler manifolds that is closely related to the Poincar\'e duality (PD) and the hard Lefschetz theorem (HL): A legitimate question to ask is what is the signature of the Hermitian pairing obtained by composing the Poincar\'e pairing with the Lefschetz map. Since the latter is self-adjoint with respect to the former, it is enough to restrict the problem to primitive elements. According to the {\it Hodge-Riemann relations} (HR), the pairing is then either positive or negative definite, depending on the degree of the cohomology class (see, e.g., \S3 of \cite{Huybrechts2005}). Remarkably, some version of the so-called {\it Kähler package} PD--HL--HR has appeared in numerous different situations outside of Kähler geometry \cite{AdiprasitoHuhKatz18,Karu04,McMullen93,EliasWilliamson14,HuhWang17,BraendenHuh20}, and very often proved to be the key to solving a long-standing open problem (see also Huh's lucid account \cite{Huh18}).

In this article, a step is taken towards completing the Kähler package for the algebra $\Val^\infty$; namely, a version of the Hodge-Riemann relations is proved in two particular situations. 

In order to recast the above problem in the language of the Alesker product, let us fix some notation: First, let $\Val_k^\infty\subset\Val^\infty$ denote the subspace of $k$-homogeneous elements, and let $\mu_1\in\Val_1^\infty$ be the first intrinsic volume. Further, assume $0\leq k\leq \lfloor\frac n2\rfloor$, where $n$ is the dimension of the underlying Euclidean space, and let us call a valuation $\phi\in\Val_k^\infty$ {\it primitive} if $\phi\cdot\mu_1^{n-2k+1}=0$. Finally, consider the {\it Alesker-Hodge-Riemann pairing} $Q\maps{\Val_k^\infty\times\Val_k^\infty}{\Val_n^\infty}$ given by
\begin{align}
Q(\phi,\psi)=\b\phi\cdot\psi\cdot\mu_1^{n-2k}.
\end{align}
After the standard identification $\Val_n^\infty\cong\CC$, $Q$ becomes an (a priori non-degenerate) Hermitian form. As far as its signature on primitive elements is concerned, a careful look at the various known subalgebras of $G$-invariant valuations---in particular, at the sufficiently complicated case $G=\Spin(9)$, resolved recently in the author's Ph.D. thesis \cite{KotrbatyPhD}---shows that it seems to precisely reflect the parity of the degree $k$. Since all such valuations are in fact {\it even} (curiously, even when $-\id\notin G$), it is natural to expect a more general phenomenon underlying the behavior, at least in this case. Indeed, it is our first main result that
\begin{itheorem}
\label{thm:HReven}
Let $0\leq k\leq \lfloor\frac n2\rfloor$. Any non-zero primitive even valuation $\phi\in\Val_k^\infty$ satisfies
\begin{align}
\label{eq:HReven}
(-1)^k\,Q(\phi,\phi)>0.
\end{align}
\end{itheorem}

The general situation turns out to be slightly more subtle: Perhaps surprisingly, already in the case $k=1$, which is clearly the first non-trivial one to deal with as $0$-homogeneous valuations are constant and thus even, the dependence of the signature of the Alesker-Hodge-Riemann pairing on the parity of a valuation starts manifesting itself. Specifically, let even valuations have parity 0, and odd valuations parity 1. The second main result of our article is then as follows:
\begin{itheorem}
\label{thm:HR1}
Let $s\in\{0,1\}$. Any non-zero primitive valuation $\phi\in\Val_1^\infty$ of parity $s$ satisfies
\begin{align}
\label{eq:HR1}
(-1)^{1+s}\,Q(\phi,\phi)>0.
\end{align}
\end{itheorem}

The proofs of Theorems \ref{thm:HReven} and \ref{thm:HR1} both rely on representing the valuations in question by smooth functions (on Grassmannians in the former case, on the sphere in the latter). In either situation the Lefschetz map boils down to a certain $\SO(n)$-equivariant integral transform which makes it possible to employ representation theory and to deduce the result from the sign of its eigenvalues. In the case of 1-homogeneous valuations, a large part of the work has already been done by Bernig and Hug in their recent article \cite{BernigHug18}, upon which our proof is heavily based.

\subsection{Conjectures}

Although the techniques behind the two main results of the present article certainly reach their limits here and, therefore, different arguments would be needed in order to prove a stronger statement,  we believe it is reasonable to expect the Hodge-Riemann relations to hold in a more general context. Let us formulate precise conjectures. First, the natural generalization of Theorems \ref{thm:HReven} and \ref{thm:HR1} would certainly be as follows:
\begin{iconjecture}
\label{con:HR}
Let $0\leq k\leq \lfloor\frac n2\rfloor$ and $s\in\{0,1\}$. Any non-zero primitive valuation $\phi\in\Val_k^\infty$ of parity $s$ satisfies
\begin{align}
\label{eq:HR}
(-1)^{k+s}\,Q(\phi,\phi)>0.
\end{align}
\end{iconjecture}

Second, applying the Alesker-Fourier transform, the Hodge-Riemann-type relations \eqref{eq:HR} can be reformulated in terms of the Bernig-Fu convolution. In this case, the algebra is graded by the co-degree, and the Lefschetz map is induced by the penultimate intrinsic volume $\mu_{n-1}\in\Val_{n-1}^\infty$. Let us say that a valuation $\phi\in\Val_{n-k}^\infty$ is {\it co-primitive} if $\phi*\mu_{n-1}^{*(n-2k+1)}=0$, and consider the {\it Bernig-Fu-Hodge-Riemann pairing} $\wt Q\maps{\Val_{n-k}^\infty\times\Val_{n-k}^\infty}{\Val_0^\infty\cong\CC}$ given by
\begin{align}
\wt Q(\phi,\psi)=\b\phi*\psi*\mu_{n-1}^{*(n-2k)}.
\end{align}
We show that Conjecture \ref{con:HR} is equivalent to
\begin{iconjecture}
\label{con:HRc}
Let $0\leq k\leq \lfloor\frac n2\rfloor$. Any non-zero co-primitive valuation $\phi\in\Val_{n-k}^\infty$ satisfies
\begin{align}
\label{eq:HRc}
(-1)^{k}\,\wt Q(\phi,\phi)>0.
\end{align}
\end{iconjecture}
Notice that this setting appears to be the more natural one as \eqref{eq:HRc} no longer depends on the parity; we shall also see that it is more appropriate for applications and further extensions. Further, let us point out that Conjecture \ref{con:HRc} is true in the proven cases of Conjecture \ref{con:HR}, i.e., for even valuations or if $k=1$. 

Very often the Kähler package is as powerful as proclaimed only if it comes in a more general version than we have discussed here. Namely, instead of being the composition of several copies of the same map, the Lefschetz map (in both HL and HR) is considered to be composed of general elements of a certain cone of operators. For Kähler manifolds, in fact, such {\it mixed} versions of HL and HR were proved only recently by Dinh and Nguy\^en \cite{DinhNguyen06} who completed earlier work of Gromov \cite{Gromov90} and Timorin \cite{Timorin98}. Motivated by these developments, we, finally, conjecture the mixed versions of the hard Lefschetz theorem and of the Hodge-Riemann relations for the algebra $\Val^\infty$ as follows:
\begin{iconjecture}
\label{con:mixed}
Let $0\leq k\leq \lfloor\frac n2\rfloor$ and let $K_{2k},\dots,K_n$ be convex bodies with smooth boundary and positive curvature.
\begin{enuma}
\item The mapping $\Val^\infty_{n-k}\rightarrow\Val_k^\infty$ given by
\begin{align}
\phi\mapsto\phi*V(\Cdot[n-1],K_{2k+1})*\cdots* V(\Cdot[n-1],K_n)
\end{align}
is an isomorphism. 
\item Let $\phi\in\Val_{n-k}^\infty$ be non-zero and such that
\begin{align}
\phi*V(\Cdot[n-1],K_{2k})*V(\Cdot[n-1],K_{2k+1})*\cdots* V(\Cdot[n-1],K_n)=0.
\end{align}
Then
\begin{align}
\label{eq:mixedHR}
(-1)^k\,\b\phi*\phi*V(\Cdot[n-1],K_{2k+1})*\cdots *V(\Cdot[n-1],K_n)>0.
\end{align}
\end{enuma}
\end{iconjecture}

Observe that the two parts of Conjecture \ref{con:mixed} generalize their non-mixed counterparts Theorem \ref{thm:HLconvolution} below and Conjecture \ref{con:HRc}, respectively, since $\mu_{n-1}$ is positively proportional to the mixed volume $V(\Cdot[n-1],D)$, with $D$ being the Euclidean ball. Let us also point out that reformulating the previous conjecture back in terms of the Alesker product would involve---at least under the additional assumption of central symmetry---the so-called {\it Holmes-Thompson intrinsic volumes}, i.e., valuations naturally assigned to a general smooth Minkowski space (cf. \cite{FaifmanWannerer21,Bernig07,AlvarezPaivaBerck06}).

\subsection{Applications to inequalities of geometric type}

It turns out that some of the purely algebraic formulations of the aforedescribed results and conjectures can in fact acquire a geometric meaning. This finally provides the anticipated connection between the algebraic theory of valuations and the isoperimetric inequalities.

First, we show that Conjecture \ref{con:HRc}---used in the situation $k=1$ where we know it is true---yields an important special case of the Aleksandrov-Fenchel inequality, namely, Minkowski's second inequality for one body being the Euclidean ball, or, in other words, the isoperimetric inequality between the first and the second intrinsic volume. This follows a simple argument, a special case of which was communicated to us by S. Alesker. 

Furthermore, using a `mixed' version of the same argument, we show that the corresponding special case of the conjectured mixed Hodge-Riemann relations (Conjecture \ref{con:mixed})---if true---yields the Aleksandrov-Fenchel inequality in its full generality. In particular, this would establish also the rest of the isoperimetric inequalities. Recall that one way to formulate these is to say that the sequence of (properly normalized) intrinsic volumes is log-concave. To quote June Huh \cite{Huh18}, {\it the log-concavity of a sequence is not only important because of its applications but because it hints the existence of a structure that satisfies PD, HL, and HR}. Conjecture \ref{con:mixed} may thus be viewed as yet another argument in favor of this phenomenon, and vice versa. 

To conclude the introduction, let us point out that among the numerous known proofs of the Aleksandrov-Fenchel inequality \cite{Aleksandrov37,Aleksandrov38,McMullen93,Wang18,ShenfeldvanHandel19,Cordero-ErausquinEA19} (see also \cite{BuragoZalgaller1988}, \S27), one is particularly relevant to our approach; namely, McMullen \cite{McMullen93} regards the inequality as a special case of the (mixed) Hodge-Riemann relations in his polytope algebra \cite{McMullen89}. The interplay between McMullen's algebra and the algebra $\Val^\infty$ was recently investigated by Bernig and Faifman \cite{BernigFaifman16} who showed that the former is a subalgebra of the (partial) convolution algebra $\Val^{-\infty}$ of {\it generalized} translation-invariant valuations in which $\Val^\infty$ is densely embedded. From this point of view it would certainly be interesting to know whether an explicit relationship exists  between Conjecture \ref{con:mixed} and McMullen's result.

\subsection*{Acknowledgements}

I am grateful to Karim Adiprasito and Fr\'ed\'eric Chapoton for extremely valuable discussions out of which this project actually arose; to Semyon Alesker for his constant interest in this work and for numerous helpful suggestions, in particular, for the beautiful idea of applying the results to geometric-type inequalities; to the DAAD Foundation that supported my stay at the Tel Aviv University where this work was initiated; as well as to Franz Schuster and Thomas Wannerer for their encouragement and many useful comments on earlier versions of the paper. Also, I wish to thank to an anonymous referee for useful remarks.

\section{Preliminaries}

\subsection{Valuations on convex bodies}

\label{ss:valuations}

Let us begin by recalling some necessary facts from both classical and modern theory of valuations. Our references for this and the following section are monographs \cite{Alesker2018} and \cite{Schneider2014}.

To establish notation, let $\K$ be the family of convex bodies, i.e., non-empty compact convex sets in $\RR^n$. On this space, the operations of Minkowski addition and scalar multiplication are, respectively, given for $K,L\in\K$ and $\alpha\in\RR$ as follows:
\begin{align}
K+L&=\{x+y\mid x\in K,y\in L\},\\
\alpha K&=\{\alpha x\mid x\in K\}.
\end{align}

A functional $\phi\maps{\K}{\CC}$ is called a {\it valuation} if
\begin{align}
\phi(K\cup L)+\phi(K\cap L)=\phi(K)+\phi(L)
\end{align}
holds for any $K,L\in\K$ with $K\cup L\in\K$. A valuation $\phi$ is said to be {\it translation invariant} if $\phi(K+\{x\})=\phi(K)$
holds for any $K\in\K$ and $x\in\RR^n$, and {\it continuous} if it is so with respect to the Hausdorff metric $
\rho_H(K,L)=\inf\{\e>0\mid K\subset L+\e D,L\subset K+\e D\}$, where $D\subset\RR^n$ is the (origin-centered) unit Euclidean ball. We shall consider entirely valuations enjoying both these properties and denote the vector space of all such by $\Val$. Let further $\Val_k\subset\Val$ be the subspace of valuations satisfying $\phi(\lambda K)=\lambda^k\phi(K)$ for any $\lambda>0$ and $K\in\K$. Remarkably, one has the McMullen grading
\begin{align}
\label{eq:McMullenGr}
\Val=\bigoplus_{k=0}^n\Val_k,
\end{align}
in consequence of which $\Val$ becomes a Banach space with respect to $\Norm\phi=\sup_{K\subset D}\norm{\phi(K)}$. Another, obvious, grading is with respect to parity:
\begin{align}
\label{eq:parityGr}
\Val=\Val^0\oplus\Val^1,
\end{align}
where $\Val^s=\{\phi\in\Val\mid\phi(-K)=(-1)^s\phi(K)\text{ for any }K\in\K\}$. We denote $\Val^s_k=\Val_k\cap\Val^s$. There is a natural $\GL(n,\RR)$ action on $\Val$ given by $(g,\phi)\mapsto\phi\circ g^{-1}$. Clearly, each $\Val_k^s$ is then a closed invariant subspace. The following result is central to the theory of valuations:
\begin{theorem}[Alesker \cite{Alesker01}]
\label{thm:IT}
For any $0\leq k\leq n$ and $s=0,1$, the $\GL(n,\RR)$ module $\Val_k^s$ is irreducible, i.e., admits no proper closed invariant subspace.
\end{theorem}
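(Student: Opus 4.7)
The plan is to show that any non-zero closed $\GL(n,\RR)$-invariant subspace $W\subset\Val_k^s$ must coincide with $\Val_k^s$. The extreme degrees are trivial: $\Val_0=\CC\cdot\chi$ (Euler characteristic) and $\Val_n=\CC\cdot\mathrm{vol}_n$ are both one-dimensional by McMullen's grading and the known description of top- and bottom-degree valuations. For intermediate $k$, I would split according to parity $s$.

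For $s=0$, I would invoke Klain's theorem, which furnishes a $\GL(n,\RR)$-equivariant injection $\mathrm{Kl}\colon\Val_k^0\hookrightarrow C(\mathrm{Gr}_k(\RR^n))$ sending a valuation to the density of its restriction to each $k$-plane. This realizes $\Val_k^0$ as an invariant subspace of continuous functions on the Grassmannian, placing the problem in the familiar setting of representations of a reductive group on sections of a line bundle over a flag variety. Combining this with the McMullen--Schneider fact that mixed volumes $V(\,\cdot\,[k],K_{k+1},\dots,K_n)$ with smooth strictly convex $K_j$ span a dense subspace, I would attempt a Schur-type argument: taking any non-zero $\phi\in W$, apply a sequence in $\GL(n,\RR)$ degenerating to projections onto $k$-planes and extract limits in the Banach topology of $\Val_k^0$; this should produce Klain functions of arbitrarily prescribed mixed volumes, forcing $W=\Val_k^0$. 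A cleaner representation-theoretic alternative would be to decompose $\Val_k^0$ into $\SO(n)$-isotypic components and use that each non-zero $\GL(n,\RR)$-submodule must meet the spherical (trivial) $K$-type, which is generated by the intrinsic volume $\mu_k$.

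For $s=1$, no Klain-type embedding into functions on a Grassmannian is available, but there is an analogous (Schneider-type) injection of $\Val_k^1$ into a space of sections of a suitable line bundle over a partial flag manifold, built from the oriented cosine transform applied to area measures. Repeating the previous strategy in this more delicate setting, the goal would be to show that any non-zero $\GL(n,\RR)$-orbit generates the full module after taking closed linear span, again reducing to an $\SO(n)$-isotypic analysis.

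The main obstacle is undoubtedly the odd case: the image of $\Val_k^1$ under the Schneider-type embedding is much less transparent than the even case, and identifying it in representation-theoretic terms requires a fine analysis of the principal series of $\GL(n,\RR)$ and of the cosine transform on the sphere (or flag manifold). A secondary difficulty is handling limits in the topology of $\Val$ when degenerating $\GL(n,\RR)$-elements, since the Klain/Schneider maps are continuous but their images are not closed in any obvious larger function space; controlling these limits is where the deepest input of Alesker's method enters.
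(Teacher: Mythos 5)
This theorem is not proved in the paper at all: it is Alesker's irreducibility theorem, quoted from \cite{Alesker01} as an external input, so there is no in-text argument to compare yours against. Judged on its own terms, your proposal correctly identifies the skeleton of Alesker's actual strategy: the extreme degrees $k=0,n$ are dispatched by McMullen and Hadwiger, the even part is embedded via the Klain map into functions on $\Grass_k$, the odd part via a Schneider-type map into sections over a partial flag manifold, and the density of mixed volumes supplies concrete vectors to generate from. But as written it is a road map rather than a proof, and the decisive ingredient is absent.

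The gap is this: embedding $\Val_k^s$ into a function space over a flag variety only reduces the problem to identifying the image as an irreducible subquotient of a degenerate principal series representation of $\GL(n,\RR)$, and that identification requires knowing the composition series of those induced representations. This is the hard core of Alesker's proof; he obtains it via Beilinson--Bernstein localization and the theory of D-modules on the flag variety, combined with the Howe--Lee analysis of $K$-types and the Klain/Schneider injectivity theorems. Neither of your proposed shortcuts supplies this input. The ``Schur-type argument'' of degenerating elements of $\GL(n,\RR)$ toward projections onto $k$-planes does not produce invertible group elements, and the limits it requires are exactly the uncontrolled closures you yourself flag at the end. The alternative claim that every non-zero closed invariant subspace ``must meet the spherical $K$-type'' is circular: it is a consequence of irreducibility, not a tool for proving it, and it fails in general for proper submodules of the ambient principal series, which do exist. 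In short, the representation-theoretic analysis you defer to as a ``fine analysis of the principal series'' is not a secondary difficulty but the entire content of the theorem.
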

Observe that the statement is only non-trivial for $0<k<n$. Indeed, one has $\Val_0=\linspan\{\chi\}$, where $\chi\equiv1$ is the Euler characteristic, and similarly $\Val_n=\linspan\{\vol_n\}$ is spanned by the Lebesgue measure. The former is easy to see, while the latter is a deep result of Hadwiger. As it is usual, let us identify $\Val_0$ and $\Val_n$ with $\CC$ via $\chi$ and $\vol_n$, respectively. 

A broad generalization of the two prime examples is provided by the concept of {\it mixed volumes}. Recall that the mixed volume of $K_1,\dots,K_n\in\K$ is defined as 
\begin{align}
\label{eq:MV1}
V(K_1,\dots,K_n)=\frac1{n!}\frac{\partial^n}{\partial\lambda_1\cdots\partial\lambda_n}\bigg|_{\lambda_1,\dots,\lambda_n=0}\vol_n(\lambda_1K_1+\cdots+\lambda_n K_n).
\end{align}
According to a classical result of Minkowski, the function being differentiated on the right-hand side of \eqref{eq:MV1} is actually a homogeneous polynomial of degree $n$. It is well known that $V$ is totally symmetric, real valued, and non-negative, and that $V(\Cdot[k],K_{k+1},\dots,K_n)\in\Val_k$ for any $K_i$, where $[k]$ stands for $k$ copies. In particular,
\begin{align}
\label{eq:IV}
\mu_k=c_{k}V(\Cdot[k],D[n-k])
\end{align}
is the $k$-th {\it intrinsic volume}. The exact values of the normalizing constants $c_k>0$ (which can be found e.g. in \cite{KlainRota1997}, p. 141) will not be important for what follows; let us only mention that they are chosen such that $\mu_0=\chi$ and $\mu_n=V(\Cdot[n])=\vol_n$. 

Famously, the intrinsic volumes satisfy, for any $K\in\K$, the {\it Aleksandrov-} or {\it isoperimetric inequalities}
\begin{align}
\label{eq:II}
\frac{\mu_1(K)}{\mu_1(D)}\geq\left(\frac{\mu_2(K)}{\mu_2(D)}\right)^{\frac 12}\geq\cdots\geq\left(\frac{\mu_k(K)}{\mu_k(D)}\right)^{\frac 1k}\geq\cdots\geq\left(\frac{\mu_n(K)}{\mu_n(D)}\right)^{\frac 1n}.
\end{align}
In fact, \eqref{eq:II} is only a consequence of a much stronger result, namely, the {\it Aleksandrov-Fenchel inequality} for mixed volumes: For all $K_1,\dots,K_n\in\K$, one has
\begin{align}
\label{eq:AF}
V(K_1,K_2,K_3,\dots,K_n)^2\geq V(K_1,K_1,K_3,\dots,K_n)\,V(K_2,K_2,K_3,\dots,K_n).
\end{align}

According to McMullen's conjecture \cite{McMullen80}, whose proof is an easy consequence of Theorem \ref{thm:IT}, mixed volumes span a dense subset of $\Val$. By the standard polarization formulas, the same is true for valuations of the form $\vol_n(\Cdot+K)$, with $K\in\K$.

\subsection{Smooth valuations}

\label{ss:smooth}

A valuation $\phi\in\Val$ is said to be {\it smooth} if the Banach-space-valued mapping $\GL(n,\RR)\rightarrow\Val$ given by $g\mapsto \phi\circ g^{-1}$ is infinitely differentiable. It is a general fact from representation theory that the subspace $\Val^\infty\subset\Val$ of such valuations is invariant, dense, and carries a natural Fr\'echet-space topology (stronger than that induced from $\Val$), with which $\Val^\infty$ will be tacitly assumed to be endowed. In this connection, observe that the irreducibility theorem (Theorem \ref{thm:IT}) holds verbatim for the $\GL(n,\RR)$ modules $\Val^{s,\infty}_k=\Val^s_k\cap\Val^\infty$ as well as the gradings \eqref{eq:McMullenGr} and \eqref{eq:parityGr} for, respectively, $\Val^\infty_k=\Val_k\cap\Val^\infty$ and $\Val^{s,\infty}=\Val^s\cap\Val^\infty$.

The notion of smoothness is essential to the modern valuation theory as $\Val^\infty$ can be naturally equipped with an array of striking algebraic structures that do not, however, extend to $\Val$. To this end, let us recall that the valuations $V(\Cdot[k],K_{k+1},\dots,K_n)$ and $\vol_n(\Cdot+K)$ are smooth provided $K_i$ and $K$, respectively, belong to the class $\K_+^\infty$ of convex bodies with non-empty interior, smooth boundary, and positive Gauss-Kronecker curvature. In particular, the intrinsic volumes are such. Against this background, the {\it Alesker product} and the {\it Bernig-Fu convolution} are, respectively, given as follows:
\begin{theorem}[Alesker \cite{Alesker04}]
\label{thm:AP}
Let $\Delta\maps{\RR^n}{\RR^{2n}}\mape{x}{(x,x)}$ be the diagonal embedding. There exists a unique bilinear, continuous product $\cdot$ on $\Val^\infty$ such that
\begin{align}
\label{eq:product1}
\vol_n(\Cdot+K)\cdot\vol_n(\Cdot+L)=\vol_{2n}(\Delta(\Cdot)+K\times L)
\end{align}
holds for any $K,L\in\K_+^\infty$. Moreover,
\begin{enuma}
\item $\cdot$ is commutative and associative,
\item $\phi\cdot\chi=\phi$ for any $\phi\in\Val^\infty$,
\item $\Val^{s,\infty}_k\cdot\Val^{r,\infty}_l\subset\Val_{k+l}^{q,\infty}$ with $q=r+s\mod 2$,
\item $\mu_k\cdot\mu_l=c_{k,l}\mu_{k+l}$ for some $c_{k,l}>0$,
\item the product pairing $\Val_k^\infty\times\Val^\infty_{n-k}\rightarrow\Val_n^\infty\cong\CC$ is non-degenerate, i.e., for any non-zero $\phi\in\Val_k^\infty$ there is $\psi\in\Val_{n-k}^\infty$ with $\phi\cdot\psi\neq0$.
\end{enuma}
\end{theorem}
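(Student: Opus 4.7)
The proof naturally splits into uniqueness, construction of the product on a dense subspace (where well-definedness is the central technical issue), and verification of the listed algebraic and analytic properties. The span $\mathcal{D}=\linspan_\CC\{\vol_n(\Cdot+K):K\in\K_+^\infty\}$ is dense in $\Val^\infty$: by Theorem \ref{thm:IT} applied in each bidegree, the non-zero $\GL(n,\RR)$-invariant subspace $\mathcal{D}\cap\Val_k^{s,\infty}$ must be dense in the irreducible module $\Val_k^{s,\infty}$. Any continuous bilinear product on $\Val^\infty$ is therefore determined by \eqref{eq:product1}, yielding uniqueness.

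For existence I would extend \eqref{eq:product1} bilinearly to $\mathcal{D}\times\mathcal{D}$ and then pass to $\Val^\infty\times\Val^\infty$ by continuity. The key structural observation is that for fixed $K_1,\dots,K_p,L_1,\dots,L_q\in\K_+^\infty$ the map
\[
(t,s)\mapsto\vol_{2n}\bigl(\Delta(M)+(t_1K_1+\cdots+t_pK_p)\times(s_1L_1+\cdots+s_qL_q)\bigr)
\]
is, by Minkowski's theorem applied in $\RR^{2n}$, a homogeneous polynomial of bidegree $(n,n)$ in $(t,s)\in\RR_{\geq0}^{p+q}$ for each $M\in\K$. Extracting coefficients then yields an explicit family of bilinear expressions on the span of mixed volumes $V(\Cdot[k],K_{k+1},\dots,K_n)$, which via McMullen's expansion $\vol_n(M+K)=\sum_k\binom{n}{k}V(M[k],K[n-k])$ matches the graded pieces of \eqref{eq:product1}. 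Continuity of the resulting map in both slots, needed for the extension beyond $\mathcal{D}\times\mathcal{D}$, comes from continuity of $K\mapsto\vol_{2n}(\Delta(\Cdot)+K\times L)$ in the Fr\'echet topology of $\Val^\infty$.

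Most of the listed properties then follow from \eqref{eq:product1} with only modest effort. Commutativity in (a) comes from the volume-preserving swap $(x,y)\mapsto(y,x)$ of $\RR^{2n}$, which interchanges $K\times L$ with $L\times K$ while fixing $\Delta(M)$; associativity is analogous using the triple diagonal $\RR^n\hookrightarrow\RR^{3n}$. The grading (c) is read off the polynomial expansion above, and the parity statement follows from $K\mapsto -K$, $L\mapsto -L$. Part (d) is obtained by specialising to $K=\alpha D$, $L=\beta D$, comparing coefficients of $\alpha^{n-k}\beta^{n-l}$, and invoking positivity of mixed volumes for $c_{k,l}>0$. Unitality (b) requires a slightly separate argument since $\chi\notin\mathcal{D}$: one extracts $\chi$ as the $0$-homogeneous component of $\vol_n(\Cdot+K)$ via McMullen's expansion, and the unit property then reduces via (c) to verifying that this $0$-graded piece acts trivially. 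Finally, for Poincar\'e duality (e), the left kernel of the pairing in $\Val_k^{s,\infty}$ is a closed $\GL(n,\RR)$-invariant subspace, so by Theorem \ref{thm:IT} it is either zero or everything; the non-trivial example of intrinsic volumes afforded by (d) precludes the latter, with a parallel irreducibility argument handling the odd case.

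The main obstacle is well-definedness of the product on $\mathcal{D}\times\mathcal{D}$: different presentations of the same valuation $\phi\in\mathcal{D}$ as a linear combination $\sum_i\alpha_i\vol_n(\Cdot+K_i)$ must yield the same value of $\phi\cdot\psi$. Although the polarisation argument above makes the construction transparent at the level of mixed volumes, cleanly controlling the ambiguities requires either Kashiwara-type index arguments (as in Alesker's original proof) or, alternatively, realising $\Val^\infty$ via differential forms on the sphere bundle $S\RR^n$ (the Rumin/Alesker/Bernig picture) and interpreting the product as a regularised intersection of normal cycles, under which well-definedness becomes essentially automatic; but that machinery goes beyond the setup introduced in the excerpt.
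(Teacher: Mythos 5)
The paper offers no proof of this theorem: it is imported verbatim from Alesker's work \cite{Alesker04}, so there is nothing in the text to compare your argument against. Judged as a reconstruction of Alesker's proof, your overall strategy is the standard one (uniqueness from density of $\mathcal{D}$ via the irreducibility theorem; existence by extending \eqref{eq:product1} from $\mathcal{D}\times\mathcal{D}$; properties (a)--(d) checked on generators). But it is not a proof, for two reasons. The first you acknowledge yourself: the entire analytic content of the existence statement is the well-definedness of the product on $\mathcal{D}\times\mathcal{D}$ (independence of the presentation $\phi=\sum_i\alpha_i\vol_n(\Cdot+K_i)$) together with the continuity estimates needed to extend to $\Val^\infty\times\Val^\infty$ --- separate continuity on a dense subspace does not by itself produce a continuous extension --- and you defer both to machinery you do not develop. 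That is where all the work in \cite{Alesker04} lies.

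The second gap you do not flag, and it is the more instructive one: part (e) in the odd case. Your irreducibility argument correctly reduces non-degeneracy on $\Val_k^{s,\infty}$ to exhibiting a \emph{single} pair $\phi\in\Val_k^{s,\infty}$, $\psi\in\Val_{n-k}^{s,\infty}$ with $\phi\cdot\psi\neq0$: the left kernel is closed and $\GL(n,\RR)$-invariant, hence zero or everything. For $s=0$ the intrinsic volumes from (d) settle this. For $s=1$ there is no such canonical witness --- the intrinsic volumes are all even --- and ``a parallel irreducibility argument'' cannot manufacture one: irreducibility only says the kernel is all or nothing, it cannot decide which. Producing an odd pair with non-vanishing product is precisely the hard part of Alesker's proof of Poincar\'e duality and requires the functional-analytic description of odd valuations together with an explicit computation. (Within the present paper one could point to \eqref{eq:pdmufg} applied to an odd spherical harmonic of degree $q\geq3$, but that formula of Bernig and Hug postdates the theorem and presupposes the product structure, so it cannot serve as part of its proof.) Until you supply such a witness, your argument proves (e) only for even valuations.
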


\begin{theorem}[Bernig, Fu \cite{BernigFu06}]
There exists a unique bilinear, continuous product $*$ on $\Val^\infty$ such that
\begin{align}
\label{eq:convDef}
\vol_n(\Cdot+K)*\vol_n(\Cdot+L)=\vol_n(\Cdot+K+L)
\end{align}
holds for any $K,L\in\K_+^\infty$.
\end{theorem}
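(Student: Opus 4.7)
The plan is to handle uniqueness and existence separately, both leveraging density of the span $\mathcal{V}$ of the generators $\phi_K := \vol_n(\Cdot + K)$, $K \in \K_+^\infty$, in $\Val^\infty$.

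For uniqueness, the task reduces to this density claim. Observe that $\mathcal{V}$ is $\GL(n,\RR)$-invariant, since $\phi_K \circ g^{-1} = |\det g|^{-1} \phi_{gK}$ and $\K_+^\infty$ is $\GL(n,\RR)$-stable. The scaling subgroup $\RR^* \cdot I$ has distinct eigenvalues on the different $\Val_k^{s,\infty}$, so a Vandermonde argument yields the decomposition $\mathcal{V} = \bigoplus_{k,s}(\mathcal{V} \cap \Val_k^{s,\infty})$. Each summand is a $\GL(n,\RR)$-invariant subspace of the irreducible module $\Val_k^{s,\infty}$ (Theorem \ref{thm:IT} extended to smooth valuations), hence either zero or dense. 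Non-triviality in every bi-degree follows from the Steiner-type expansion
\begin{align*}
\vol_n(M + K) = \sum_{k=0}^n \binom{n}{k}\, V(M[k], K[n-k]),
\end{align*}
recovering even parts from $K = D$ (the intrinsic volumes) and odd parts from combinations like $\phi_K - \phi_{-K}$ for asymmetric $K \in \K_+^\infty$. Density of $\mathcal{V}$ in $\Val^\infty$ then makes any continuous bilinear product satisfying \eqref{eq:convDef} uniquely determined.

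For existence, I would define $*$ on $\mathcal{V} \times \mathcal{V}$ by bilinear extension of \eqref{eq:convDef} and extend to all of $\Val^\infty$ by continuity. The decisive step is well-definedness: $\sum_i c_i\,\phi_{K_i} = 0$ in $\Val^\infty$ must imply $\sum_i c_i\,\phi_{K_i + L} = 0$ for every $L \in \K_+^\infty$. My approach would be via the normal cycle representation, under which every smooth valuation can be written as $\phi(M) = c\,\vol_n(M) + \int_{\mathrm{nc}(M)}\omega$ for some smooth $(n-1)$-form $\omega$ on the cosphere bundle $\RR^n \times S^{n-1}$; the valuations $\phi_K$ correspond to explicit forms built from the support function of $K$. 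Convolution then admits a natural realization on the level of differential forms---a bilinear operation implementing fiberwise Minkowski addition, rooted in the identity $\mathrm{nc}(K+L) \equiv \mathrm{nc}(K) + \mathrm{nc}(L)$ under identification by common outward normal. Continuity in the Fréchet topology of $\Val^\infty$ is then transparent from continuity at the form level.

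The principal obstacle is verifying that this form-level operation descends to the quotient by the kernel of the normal cycle map---equivalently, that convolving a kernel form with any representing form again lies in the kernel. This is the technical heart of the Bernig-Fu construction and requires a careful analysis of exact and vertical forms on the cosphere bundle; once in place, the extension from $\mathcal{V}$ to all of $\Val^\infty$ follows by continuity.
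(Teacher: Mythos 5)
The paper does not prove this statement; it is quoted verbatim from Bernig--Fu \cite{BernigFu06}, so your proposal can only be measured against the known construction in the literature. Your uniqueness half is sound and is exactly the standard argument: the span of the generators $\vol_n(\Cdot+K)$, $K\in\K_+^\infty$, is $\GL(n,\RR)$-invariant, decomposes along the McMullen and parity gradings, meets every $\Val_k^{s,\infty}$ non-trivially, and is therefore dense in the Fr\'echet topology by the irreducibility theorem (Theorem \ref{thm:IT} in its smooth form); a continuous bilinear map is determined on a dense subspace.

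The existence half, however, has a genuine gap, and you have named it yourself: you never establish that the form-level operation descends through the kernel of the normal-cycle map, i.e.\ that $\sum_i c_i\,\vol_n(\Cdot+K_i)=0$ forces $\sum_i c_i\,\vol_n(\Cdot+K_i+L)=0$. This is not a routine verification one can defer; it is the entire content of the existence assertion, and in Bernig--Fu it is handled by constructing the convolution directly on translation-invariant differential forms on the sphere bundle (via a partial Hodge star and wedge product, or equivalently by transporting the Alesker product through a duality) and only then checking \eqref{eq:convDef} on the generators --- the compatibility with the kernel is proved there by an explicit analysis, not assumed. Relatedly, your framing ``define $*$ on the span by bilinear extension and extend by continuity'' is logically backwards: a bilinear map defined on a dense subspace of a Fr\'echet space does not automatically admit a continuous extension (one would need uniform estimates you have not supplied), and well-definedness on the span is itself equivalent to the kernel statement you left open. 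The correct order is to define the operation globally first and then verify the formula on generators; as written, your existence argument is a plan rather than a proof.
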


Intertwining the product and the convolution is the so-called {\it Alesker-Fourier transform}:

\begin{theorem}[Alesker \cite{Alesker11}]
\label{thm:AFT}
There exists a canonical isomorphism $\FF\maps{\Val^\infty}{\Val^\infty}$ of Fr\'echet spaces such that
\begin{enuma}
\item $\FF(\phi\cdot\psi)=\FF\phi*\FF\psi$ holds for any $\phi,\psi\in\Val^\infty$,
\item $\FF\Val_{k}^{s,\infty}=\Val_{n-k}^{s,\infty}$,
\item $\FF^2=(-1)^s\id$ on $\Val^{s,\infty}$,
\item $\FF\mu_k=\mu_{n-k}$.
\end{enuma}
\end{theorem}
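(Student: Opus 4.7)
The plan is to construct $\FF$ separately on the even and odd homogeneous components of $\Val^\infty$ and then verify the four listed properties.

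For the even part, the natural tool is the Klain embedding: by Klain's theorem, any $\phi\in\Val_k^{0,\infty}$ is uniquely determined by a smooth function $\mathrm{Kl}_\phi$ on the Grassmannian $\mathrm{Gr}_k(\RR^n)$, defined by $\phi|_E=\mathrm{Kl}_\phi(E)\cdot\vol_k|_E$. I would define $\FF\phi\in\Val_{n-k}^{0,\infty}$ to be the unique even valuation whose Klain function at $E\in\mathrm{Gr}_{n-k}$ equals $\mathrm{Kl}_\phi(E^\perp)$. The non-trivial step is to verify that the pulled-back function really lies in the Klain image on $\mathrm{Gr}_{n-k}$; this can be done by matching the $\SO(n)$-isotypic decompositions of the two Klain images, which are known to agree.

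For the odd part, I would follow Alesker's approach via the cone representation: odd smooth valuations can be encoded by Rumin-type differential forms on the cosphere bundle of $\RR^n$, and $\FF$ is then realized by an explicit $\GL(n,\RR)$-equivariant integral transform on forms---essentially an odd analogue of the cosine transform. Equivariance ensures compatibility with the even-case construction and yields a single Fr\'echet isomorphism on all of $\Val^\infty$.

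Properties (b) and (c) are essentially built into the construction: the degree swap is immediate; the identity $\FF^2=\id$ on the even part follows from $E^{\perp\perp}=E$; and the extra sign on the odd part comes from the orientation reversal inherent in the Grassmannian fibration. Property (d) follows because the Klain function of $\mu_k$ is constant (up to the normalization $c_k$), hence $\perp$-invariant, so $\FF\mu_k$ must be a scalar multiple of $\mu_{n-k}$; the scalar is pinned down to $1$ by the very definition of the intrinsic volumes.

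The main obstacle will be the intertwining identity (a), $\FF(\phi\cdot\psi)=\FF\phi*\FF\psi$. My plan is to exploit the $\GL(n,\RR)$-equivariance of both sides: by the irreducibility Theorem \ref{thm:IT} combined with a multiplicity/intertwiner analysis, the identity should reduce to checking a small number of test cases---for instance, on the intrinsic volumes, where property (d) already determines both sides, and on the generators $\vol_n(\Cdot+K)$, $K\in\K_+^\infty$, where the product is given explicitly by \eqref{eq:product1} and the convolution by \eqref{eq:convDef}. Carrying out this reduction in an honest way is the crux of the argument, since a direct verification on one of these generating families would require an explicit description of $\FF\vol_n(\Cdot+K)$ that the abstract definition above does not readily provide.
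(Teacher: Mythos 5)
This statement is quoted from Alesker's work \cite{Alesker11} and the paper offers no proof of it---it is used as a black box---so the only meaningful comparison is between your sketch and the actual construction in the literature. Your treatment of the even part is essentially correct and matches the known approach (pullback of the Klain function under $E\mapsto E^\perp$, with the isotypic comparison of the two Klain images as the justification that the pullback lands in the right space); this is the content of Alesker's earlier paper \cite{Alesker03hard} together with the Alesker--Bernstein description of the image. Properties (b)--(d) on the even part then do come out as you describe.

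However, the two places you flag as difficult are precisely where the real content of the theorem lies, and your proposal does not close either of them. The odd case is not obtained by ``an odd analogue of the cosine transform'' on Rumin forms in any direct sense: Alesker's construction proceeds by first building the transform for odd valuations in dimension $2$ (where $\Val_1^{1,\infty}$ can be described explicitly by functions on the circle), and then extending to general $n$ by a delicate inductive procedure using restrictions to two-dimensional subspaces; verifying that the result is well defined, $\GL(n,\RR)$-equivariant in the appropriate twisted sense, and compatible with the even construction occupies the bulk of a very long paper. Likewise, the multiplicativity (a) cannot be reduced to ``a small number of test cases'' by irreducibility alone: the bilinear map $(\phi,\psi)\mapsto\FF^{-1}(\FF\phi*\FF\psi)$ and the Alesker product are two $\GL(n,\RR)$-equivariant bilinear maps, but the space of such intertwiners is not known a priori to be one-dimensional, so agreeing on $\mu_k$ or on some generators does not force them to coincide everywhere; Alesker's proof of (a) is again a substantial argument, not a formal consequence of equivariance. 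Finally, in (d) the normalization $\FF\mu_k=\mu_{n-k}$ (rather than a nontrivial positive multiple) requires an explicit computation of the Klain function of $\mu_k$ against the constants $c_k$ in \eqref{eq:IV}; it is not automatic ``by the very definition of the intrinsic volumes.'' In short, your outline correctly identifies the architecture of the proof but leaves its two essential pillars---the odd construction and multiplicativity---unproved.
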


Synthesis of Theorems \ref{thm:AP} and \ref{thm:AFT} yields at once
\begin{corollary}
\label{cor:BFC}
\quad
\begin{enuma}
\item $*$ is commutative and associative,
\item $\phi*\vol_n=\phi$ for any $\phi\in\Val^\infty$,
\item $\Val^{s,\infty}_{n-k}*\Val^{r,\infty}_{n-l}\subset\Val_{n-k-l}^{q,\infty}$ with $q=r+s\mod 2$,
\item $\mu_{n-k}*\mu_{n-l}=c_{k,l}\mu_{n-k-l}$ for some $c_{k,l}>0$,
\item for any non-zero $\phi\in\Val_k^\infty$ there is $\psi\in\Val_{n-k}^\infty$ with $\phi*\psi\neq0$.
\end{enuma}
\end{corollary}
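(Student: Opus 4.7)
\medskip

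\noindent\textbf{Proof proposal.} The plan is to deduce each item of Corollary \ref{cor:BFC} directly from the corresponding item of Theorem \ref{thm:AP} by transporting it through the Alesker-Fourier transform $\FF$. The essential ingredients are supplied by Theorem \ref{thm:AFT}: $\FF$ is a Fr\'echet-space isomorphism sending $\Val_k^{s,\infty}$ onto $\Val_{n-k}^{s,\infty}$ and $\mu_k$ to $\mu_{n-k}$ (so in particular $\chi=\mu_0$ is sent to $\vol_n=\mu_n$), and it intertwines the two products via $\FF(\phi\cdot\psi)=\FF\phi*\FF\psi$.

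Given this, the strategy is uniform: to verify a statement about $*$ on elements $\alpha_1,\dots,\alpha_m\in\Val^\infty$, we write each $\alpha_i=\FF\phi_i$, apply the intertwining identity to rewrite any convolution $\alpha_{i_1}*\cdots*\alpha_{i_p}$ as $\FF(\phi_{i_1}\cdot\,\cdots\,\cdot\phi_{i_p})$, and then invoke the corresponding property of $\cdot$ from Theorem \ref{thm:AP}. In this way (a) follows from the commutativity and associativity of $\cdot$ combined with the bijectivity of $\FF$; (b) follows from $\vol_n=\FF\chi$ together with $\chi$ being the $\cdot$-unit; and (c) and (d) are immediate by applying $\FF$ to the corresponding statements on the product side, using that $\FF$ preserves parity and swaps $k\leftrightarrow n-k$---in (d) the constant is literally the same as in Theorem \ref{thm:AP}(d). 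For (e), given non-zero $\phi\in\Val_k^\infty$, the element $\alpha:=\FF^{-1}\phi\in\Val_{n-k}^\infty$ is still non-zero, so Theorem \ref{thm:AP}(e) yields $\beta\in\Val_k^\infty$ with $\alpha\cdot\beta\neq 0$; then $\psi:=\FF\beta\in\Val_{n-k}^\infty$ satisfies $\phi*\psi=\FF(\alpha\cdot\beta)\neq 0$.

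No genuine obstacle is expected, since the entire argument is a purely formal translation once Theorems \ref{thm:AP} and \ref{thm:AFT} are granted; the only bookkeeping concerns the index swap $k\leftrightarrow n-k$ in the grading and the use of surjectivity of $\FF$ in (e) to convert non-degeneracy of $\cdot$ into non-degeneracy of $*$.
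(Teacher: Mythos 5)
Your proposal is correct and is exactly the paper's argument: the paper states that Corollary \ref{cor:BFC} follows from the ``synthesis'' of Theorems \ref{thm:AP} and \ref{thm:AFT}, i.e.\ by transporting each property of the Alesker product through the Fourier transform, just as you do. (The paper additionally remarks that items (a)--(d) can also be read off directly from the defining formula \eqref{eq:convDef}, but that is only an aside.)
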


In fact, the items (a)--(d) of Corollary \ref{cor:BFC} are an easy consequence of \eqref{eq:convDef}; see also Lemma \ref{lem:conv} below. Items (e) of Theorem \ref{thm:AP} and Corollary \ref{cor:BFC} are versions of {\it Poincar\'e duality} for valuations. The following properties of the convolution will be also useful for us:
\begin{lemma}[Bernig, Fu \cite{BernigFu06}]
\label{lem:conv}
\quad
\begin{enuma}
\item Let $k+l\leq n$. There is a constant $c_{k,l}>0$ such that for any $K_1,\dots,K_k,L_1,\dots,L_l\in\K_+^\infty$,
\begin{align}
\label{eq:MVconv}
\begin{split}
&V(\Cdot[n-k],K_1,\dots,K_k)*V(\Cdot[n-l],L_1,\dots,L_l)\\
&\quad=c_{k,l}V(\Cdot[n-k-l],K_1,\dots,K_k,L_1,\dots,L_l).
\end{split}
\end{align}
\item For any $\phi\in\Val^\infty$ and $K\in\K$, one has
\begin{align}
\label{eq:convLefschetz}
(\mu_{n-1}*\phi)(K)=\frac12\frac{d}{d\lambda}\bigg|_{\lambda=0}\phi(K+\lambda D).
\end{align}
\end{enuma}
\end{lemma}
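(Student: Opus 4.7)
For part (a), my plan is to realize mixed volumes as coefficients in a Minkowski expansion and then reduce to the defining identity \eqref{eq:convDef}. For $\lambda_1, \dots, \lambda_k > 0$ the sum $\sum_i \lambda_i K_i$ lies in $\K_+^\infty$, so by the standard polarization formula
\[
V(\Cdot[n-k], K_1, \dots, K_k) = \frac{(n-k)!}{n!} \frac{\partial^k}{\partial \lambda_1 \cdots \partial \lambda_k}\bigg|_{\lambda = 0} \vol_n\bigl(\Cdot + \textstyle\sum_i \lambda_i K_i\bigr),
\]
where the right-hand side is a polynomial in $\lambda$ with coefficients in $\Val^\infty$; an analogous identity holds for $L_1, \dots, L_l$. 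Since the convolution is jointly continuous and bilinear on $\Val^\infty$, differentiation in $\lambda, \mu$ commutes with $*$, and applying \eqref{eq:convDef} to the polynomial product yields
\[
\frac{(n-k)!(n-l)!}{(n!)^2} \frac{\partial^{k+l}}{\partial \lambda \partial \mu}\bigg|_0 \vol_n\bigl(\Cdot + \textstyle\sum_i \lambda_i K_i + \sum_j \mu_j L_j\bigr).
\]
A final application of the same polarization identifies this with $c_{k,l} V(\Cdot[n-k-l], K_1, \dots, K_k, L_1, \dots, L_l)$, where $c_{k,l} = \frac{(n-k)!(n-l)!}{n!(n-k-l)!} > 0$.

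For part (b), I would establish the identity first on the dense set of valuations $\phi = \vol_n(\Cdot + L)$ with $L \in \K_+^\infty$. Using the normalization $\mu_{n-1} = \tfrac{n}{2} V(\Cdot[n-1], D)$ (i.e., half the surface area) together with the case $k = 1$ of the polarization above, namely $V(\Cdot[n-1], D) = \tfrac{1}{n} \tfrac{d}{d\lambda}\big|_0 \vol_n(\Cdot + \lambda D)$, one obtains
\[
\mu_{n-1} * \vol_n(\Cdot + L) = \tfrac{1}{2} \tfrac{d}{d\lambda}\big|_0 \vol_n(\Cdot + \lambda D + L),
\]
which evaluated at $K$ is precisely $\tfrac{1}{2} \tfrac{d}{d\lambda}\big|_0 \phi(K + \lambda D)$. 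To extend to all $\phi \in \Val^\infty$, I would argue that both sides of \eqref{eq:convLefschetz} are continuous linear functionals of $\phi$ for each fixed $K \in \K$: the left side because $\phi \mapsto \mu_{n-1} * \phi$ is continuous on $\Val^\infty$ and $\psi \mapsto \psi(K)$ is continuous on $\Val$; the right side because by McMullen's theorem $\phi(K + \lambda D)$ is a polynomial in $\lambda \geq 0$ of degree at most $n$ whose coefficients depend continuously on $\phi$.

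The main technical obstacle is the density/continuity step in (b): one has to verify that $\{\vol_n(\Cdot + L) : L \in \K_+^\infty\}$ is dense in the \emph{Fréchet} topology of $\Val^\infty$ (not merely in the Banach topology of $\Val$), which should follow from the irreducibility theorem applied separately to each $\Val_k^{s,\infty}$. Once density is in place, the remaining verifications are bookkeeping; in particular, the positivity of $c_{k,l}$ in part (a) is transparent from its closed form, and the constant $\tfrac{1}{2}$ in part (b) is pinned down by the classical identification of $\mu_{n-1}$ with half the surface area.
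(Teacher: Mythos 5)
Your argument is correct and is essentially the standard one: the paper itself gives no proof of this lemma (it is quoted from Bernig--Fu), and it even remarks after Corollary \ref{cor:BFC} that such identities are ``an easy consequence of \eqref{eq:convDef}'', which is exactly your route --- polarize $\vol_n(\Cdot+\sum_i\lambda_iK_i)$, compare coefficients of the polynomial identity obtained from \eqref{eq:convDef}, and your constant $c_{k,l}=\frac{(n-k)!\,(n-l)!}{n!\,(n-k-l)!}$ checks out. You also correctly isolate the one point requiring care in (b), namely density of $\linspan\{\vol_n(\Cdot+L): L\in\K_+^\infty\}$ in the Fr\'echet topology of $\Val^\infty$, which indeed follows from the smooth version of the irreducibility theorem since that span is $\GL(n,\RR)$-invariant and meets every graded piece $\Val_k^{s,\infty}$ nontrivially.
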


Finally, as discussed in the introduction, two versions of the {\it hard Lefschetz theorem} hold for smooth valuations:
\begin{theorem}[Bernig, Bröcker \cite{BernigBroecker07}]
\label{thm:HLconvolution}
Let $0\leq k\leq \lfloor\frac n2\rfloor$. The mapping $\Val_{n-k}^\infty\rightarrow\Val_{k}^\infty$ given by
\begin{align}
\phi\mapsto\phi*\mu_{n-1}^{*(n-2k)}
\end{align}
is an isomorphism of Fr\'echet spaces.
\end{theorem}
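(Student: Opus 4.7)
The natural approach is to reduce the statement to its counterpart for the Alesker product via the Alesker-Fourier transform $\FF$ of Theorem~\ref{thm:AFT}. Since $\FF\mu_1=\mu_{n-1}$ and $\FF$ sends products to convolutions, one has $\FF(\mu_1^{n-2k})=\mu_{n-1}^{*(n-2k)}$, and consequently
\begin{align*}
\FF(\psi\cdot\mu_1^{n-2k})=\FF\psi*\mu_{n-1}^{*(n-2k)}\quad\text{for every }\psi\in\Val_k^\infty.
\end{align*}
Because $\FF$ is an isomorphism of Fr\'echet spaces carrying $\Val_k^\infty$ onto $\Val_{n-k}^\infty$, the convolution map $\phi\mapsto\phi*\mu_{n-1}^{*(n-2k)}$ is a Fr\'echet isomorphism precisely when the product Lefschetz map $S\maps{\Val_k^\infty}{\Val_{n-k}^\infty}$ given by $S(\psi)=\psi\cdot\mu_1^{n-2k}$ is.

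To prove the product version, I would split by parity, $\Val_k^\infty=\Val_k^{0,\infty}\oplus\Val_k^{1,\infty}$ (preserved by $S$ by Theorem~\ref{thm:AP}(c)), and realize each summand concretely: the even part through the Klain embedding $\Val_k^{0,\infty}\hookrightarrow C^\infty(\Gr_k(\RR^n))$, the odd part through an analogous description on the oriented Grassmannian or via the Schneider/normal-cycle representation on $S^{n-1}$. Because $\mu_1$ is $\SO(n)$-invariant, $S$ is a continuous $\SO(n)$-equivariant operator, so by Schur's lemma it acts as a scalar on each isotypic component of the known Alesker-Bernig-Schuster $\SO(n)$-decomposition of $\Val_k^{s,\infty}$. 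The argument then reduces to (i)~identifying $S$ explicitly as an $\SO(n)$-equivariant integral transform in each of these pictures, and (ii)~showing that the resulting eigenvalue on every isotypic component is non-zero throughout the range $0\leq k\leq\lfloor n/2\rfloor$. Injectivity so obtained, combined with the matching of $\SO(n)$-multiplicities on source and target and an open-mapping argument, would yield the desired Fr\'echet-space isomorphism.

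The main obstacle is step (ii): while the identification of $S$ as a concrete integral operator is reasonably tractable, the resulting eigenvalues typically present themselves as intricate ratios of Gamma functions, and proving their strict non-vanishing throughout the permitted range is the analytic heart of the argument. This is exactly where the hypothesis $2k\leq n$ enters in an essential way, since the $k\leftrightarrow n-k$ symmetry would make the statement fail above the middle degree.
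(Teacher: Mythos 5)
The paper does not actually prove this statement: it is imported wholesale from Bernig--Br\"ocker \cite{BernigBroecker07}, so there is no internal proof to compare yours against. On its own terms, your first step --- transporting the convolution statement to the product statement via the Alesker--Fourier transform, using $\FF\mu_1=\mu_{n-1}$ and Theorem \ref{thm:AFT}(a) --- is logically sound given the paper's Theorem \ref{thm:AFT}, though you should be aware that the paper runs this implication in the opposite direction (Corollary \ref{cor:HLproduct} is deduced \emph{from} the present theorem), and that the construction of $\FF$ on odd valuations is a later and arguably deeper result than the theorem itself; the reduction is not circular only because you then propose an independent proof of the product version.

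The genuine gap is in that independent proof, specifically in the odd part. For even valuations your plan is essentially Alesker's original argument: via the Crofton and Klain maps the Lefschetz operator becomes $T_{k+l}\circ R_{k,k+l}$ (cf. \eqref{eq:Klaintimes}), Schur's lemma applies to the multiplicity-free decomposition \eqref{eq:CGrassk}, and non-vanishing of the explicit eigenvalue ratios (Lemma \ref{lem:multipT}) finishes the job. No analogous functional model exists for odd valuations: there is no Crofton- or Klain-type isomorphism of $\Val_k^{1,\infty}$ onto a function space on an (oriented) Grassmannian, and the spherical representation covers all of $\Val_k^{1,\infty}$ only for $k=1$ and $k=n-1$ --- in intermediate degrees spherical valuations form a proper subspace, as the paper itself notes in the remark following the proof of Theorem \ref{thm:HR1}. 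So step (i) of your plan, which you describe as ``reasonably tractable,'' is precisely the missing ingredient; it is the obstruction that led Bernig and Br\"ocker to abandon the harmonic-analysis route entirely and prove the convolution version directly, by representing valuations by differential forms on the sphere bundle and establishing Lefschetz-type ($\mathrm{sl}_2$) commutation relations for the derivation operator $\phi\mapsto\frac{d}{dt}\big|_{t=0}\phi(\cdot+tD)$, which by \eqref{eq:convLefschetz} is proportional to convolution with $\mu_{n-1}$. A secondary, repairable point: even where eigenvalues are computable, their non-vanishing gives injectivity on each isotypic component but not surjectivity onto the smooth vectors; one additionally needs at most polynomial growth of the inverse eigenvalues in the highest weight before the open mapping theorem can be invoked.
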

\begin{corollary}
\label{cor:HLproduct}
Let $0\leq k\leq \lfloor\frac n2\rfloor$. The mapping $\Val_{k}^\infty\rightarrow\Val_{n-k}^\infty$ given by
\begin{align}
\phi\mapsto\phi\cdot\mu_1^{n-2k}
\end{align}
is an isomorphism of Fr\'echet spaces.
\end{corollary}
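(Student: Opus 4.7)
The plan is to deduce the corollary from Theorem \ref{thm:HLconvolution} by conjugating the convolution Lefschetz map with the Alesker-Fourier transform $\FF$ of Theorem \ref{thm:AFT}. The idea is that $\FF$ intertwines the two multiplicative structures, swaps the gradings $k\leftrightarrow n-k$, and sends $\mu_1$ to $\mu_{n-1}$, so the ``product'' Lefschetz operator should be transported to the ``convolution'' Lefschetz operator (or vice versa) by $\FF$.

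Concretely, denote the map in the statement by $L\maps{\Val_k^\infty}{\Val_{n-k}^\infty}$, $L\phi=\phi\cdot\mu_1^{n-2k}$, and let $\tilde L\maps{\Val_{n-k}^\infty}{\Val_k^\infty}$ be the convolution Lefschetz map $\tilde L\psi=\psi*\mu_{n-1}^{*(n-2k)}$ from Theorem \ref{thm:HLconvolution}. For any $\phi\in\Val_k^\infty$, I would repeatedly apply the intertwining identity $\FF(\alpha\cdot\beta)=\FF\alpha*\FF\beta$ from Theorem \ref{thm:AFT}(a), together with $\FF\mu_1=\mu_{n-1}$ from Theorem \ref{thm:AFT}(d), to obtain
\begin{align}
\FF(L\phi)=\FF\phi*(\FF\mu_1)^{*(n-2k)}=\FF\phi*\mu_{n-1}^{*(n-2k)}=\tilde L(\FF\phi).
\end{align}
Since Theorem \ref{thm:AFT} guarantees that $\FF$ restricts to an isomorphism of Fréchet spaces $\Val_k^\infty\to\Val_{n-k}^\infty$ (and likewise $\Val_{n-k}^\infty\to\Val_k^\infty$), the identity $\FF\circ L=\tilde L\circ\FF$ expresses $L$ as $L=\FF^{-1}\circ\tilde L\circ\FF$. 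As a composition of three isomorphisms of Fréchet spaces, $L$ is itself such an isomorphism, which is the claim.

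There is essentially no obstacle here: the statement is a purely formal consequence of Theorems \ref{thm:AFT} and \ref{thm:HLconvolution}. The only point requiring (minor) care is to verify that $\FF$ commutes with taking powers with respect to the appropriate operation, which follows inductively from the multiplicativity of $\FF$ together with $\FF\mu_1=\mu_{n-1}$, and to note that $\FF$ respects the Fréchet topology in both directions so that ``isomorphism of Fréchet spaces'' (rather than merely of vector spaces) is preserved by the conjugation.
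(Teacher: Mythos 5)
Your argument is correct and is precisely the intended derivation: the paper states this as a corollary of Theorem \ref{thm:HLconvolution} without writing out the proof, and the implicit argument is exactly your conjugation $L=\FF^{-1}\circ\tilde L\circ\FF$ using the intertwining property $\FF(\alpha\cdot\beta)=\FF\alpha*\FF\beta$, the grading swap $\FF\Val_k^{s,\infty}=\Val_{n-k}^{s,\infty}$, and $\FF\mu_1=\mu_{n-1}$. Nothing to add.
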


Observe that it follows at once from \eqref{eq:product1} that
\begin{align}
\label{eq:APequivariance}
(\phi\circ g^{-1})\cdot(\psi\circ g^{-1})=(\phi\cdot\psi)\circ g^{-1}
\end{align}
holds for any $\phi,\psi\in\Val^\infty$ and $g\in\GL(n,\RR)$. In particular, the product with $\mu_1$ commutes with the maximal compact subgroup $\O(n)$ of $\GL(n,\RR)$ as the intrinsic volumes are $\O(n)$ invariant.

\subsection{Functions on Grassmannians and integral transforms}

\label{ss:fGr}

Let us now collect some known facts about the Hilbert space of square-integrable functions on the Grassmann manifold $\Grass_k$ of $k$-dimensional real subspaces in $\RR^n$. Enough for us will be to assume $0\leq k\leq \lfloor\frac n2\rfloor$. 

First of all, $L^2(\Grass_k)$ has an obvious $\SO(n)$-module structure. In this connection, recall that the family of irreducible $\SO(n)$ representations is parametrized by the set $\Lambda$ of their highest weights, where, if $n=2m+1$ is odd,
\begin{align}
\Lambda=\left\{(\lambda_1,\dots,\lambda_m)\in\ZZ^m\mid \lambda_1\geq\lambda_2\geq\cdots\geq\lambda_m\geq0\right\},
\end{align}
while for $n=2m$ even,
\begin{align}
\Lambda=\left\{(\lambda_1,\dots,\lambda_m)\in\ZZ^m\mid \lambda_1\geq\lambda_2\geq\cdots\geq\lambda_{m-1}\geq\norm{\lambda_m}\right\}.
\end{align}
In both cases, for $0\leq k\leq m=\lfloor \frac n2\rfloor$, let us denote
\begin{align}
\Lambda_k^0=\Lambda\cap\left((2\ZZ)^k\times\{0\}^{m-k}\right).
\end{align}
It is well known from works of Sugiura \cite{Sugiura62} and Takeuchi \cite{Takeuchi73} (see also \cite{Strichartz75}) that the decomposition of $L^2(\Grass_k)$ into irreducible $\SO(n)$ modules is multiplicity free. More precisely,
\begin{align}
\label{eq:CGrassk}
L^2(\Grass_k)=\widehat{\bigoplus_{\lambda\in\Lambda_k^0}}\calH_\lambda,
\end{align}
where $\calH_\lambda\subset C^{\infty}(\Grass_k)$ is the irreducible $\SO(n)$-module corresponding to the highest weight $\lambda$, and $\widehat\bigoplus$ denotes the ($L^2$-orthogonal) Hilbert-space direct sum (cf. \cite{Sepanski2007}, \S3.2.2).

For each $\lambda=(2m_1,\dots,2m_k,0,\dots,0)\in\Lambda_k^0$, the highest-weight vector $h_\lambda\in\calH_\lambda$ (which is unique up to scaling) was described by Strichartz \cite{Strichartz75}: Assume first $k\geq 1$. Take an arbitrary $E\in\Grass_k$ and let
\begin{align*}
\begin{pmatrix}x_{1,1}\\\vdots\\x_{n,1}\end{pmatrix},\dots,\begin{pmatrix}x_{1,k}\\\vdots\\x_{n,k}\end{pmatrix}
\end{align*}
be an orthonormal basis of $E$. For $1\leq l\leq k$, we first define
\begin{align}
\label{eq:A[l]}
A[l]=\begin{pmatrix}x_{1,1}+\sqrt{-1}x_{2,1}&\cdots&x_{1,k}+\sqrt{-1}x_{2,k}\\
x_{3,1}+\sqrt{-1}x_{4,1}&\cdots&x_{3,k}+\sqrt{-1}x_{4,k}\\
\vdots&&\vdots\\
x_{2l-1,1}+\sqrt{-1}x_{2l,1}&\cdots&x_{2l-1,k}+\sqrt{-1}x_{2l,k}\\
\end{pmatrix}\in\CC^{l\times k}.
\end{align}
Then, if $m_k\geq0$, one puts
\begin{align}
h_\lambda(E)=\left(\prod_{l=1}^{k-1}\det\left(A[l]A[l]^t\right)^{m_l-m_{l+1}}\right)\det\left(A[k]A[k]^t\right)^{m_k},
\end{align}
and, if $m_k<0$,
\begin{align}
h_\lambda(E)=\left(\prod_{l=1}^{k-1}\det\left(A[l]A[l]^t\right)^{m_l-\norm{m_{l+1}}}\right)\b{\det\left(A[k]A[k]^t\right)}^{\norm{m_k}}.
\end{align}
It is straightforward to verify that this definition is independent of the choice of the orthonormal basis. If $k=0$, one necessarily has $\lambda=(0,\dots,0)$ and defines $h_\lambda(\{0\})=1$.

Let us recall two important integral transforms on Grassmanians. First, for $0\leq k\leq l\leq n$, we define the {\it Radon transform} $R_{k,l}\maps{L^2(\Grass_k)}{L^2(\Grass_l)}$ as follows:
\begin{align}
\label{eq:Rkl}
(R_{k,l}f)(E)=\int_{\Grass_k(E)}f(F)\,d F,
\end{align}
where $\Grass_k(E)=\{F\in\Grass_k\mid F\subset E\}$ is equipped with the Haar probability measure $dF$. Notice that $R_{k,k}=\id$. Reversing the inclusion $F\subset E$, one can extend the definition of the Radon transform also to $k>l$. For our purpose, however, the above form will be sufficient.

Second, consider $E,F\in\Grass_k$, $0\leq k\leq n$. Take any compact subset $A\subset E$ with $\vol_k(A)\neq0$ and put
\begin{align}
\label{eq:cosEF}
\norm{\cos(E,F)}=\frac{\vol_k(\pi_FA)}{\vol_k(A)},
\end{align}
where $\pi_F\maps{\RR^n}{F}$ is the orthogonal projection. Notice that \eqref{eq:cosEF} is independent of the choice of the set $A$. Then the {\it cosine transform} $T_k\maps{L^2(\Grass_k)}{L^2(\Grass_k)}$ is defined as
\begin{align}
(T_kf)(E)=\int_{\Grass_k}\norm{\cos(E,F)}f(F)\,d F,
\end{align}
where $dF$ is the Haar measure. Observe that both $R_{k,l}$ and $T_k$ are linear, continuous, and $\SO(n)$ equivariant, in consequence of which they map smooth functions to smooth functions. In general, one can define the cosine transform between functions on Grassmanians of distinct rank, or one can consider the so-called $\alpha$-cosine transform where the kernel is raised to $\alpha\in\CC$. Again, such generalizations will not be considered here. What we shall need instead are known eigenvalues of the cosine transform. For $\nu\in\CC$ and $k\in\NN_0$, consider the Pochhammer symbol
\begin{align}
(\nu)_k=\prod_{j=0}^{k-1}(\nu+j).
\end{align}
\begin{lemma}[Zhang \cite{Zhang09}, see also \cite{AleskerGS16}]
\label{lem:multipT}
Let $1\leq k\leq \lfloor \frac n2\rfloor$ and $\lambda=(2m_1,\dots,2m_k,0,\dots,0)\in\Lambda_k^0$. Then, for some normalizing constant $c_k>0$,
\begin{align}
T_k|_{\calH_\lambda}=c_k\,\prod_{j=1}^k\frac{\left(1+\frac j2-\norm{m_j}\right)_{\norm{m_j}}}{\left(1+\frac n2-\frac j2\right)_{\norm{m_j}}}\, \id|_{\calH_\lambda}.
\end{align}
\end{lemma}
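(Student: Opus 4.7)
\emph{Proof plan.} The cosine transform $T_k$ is $\SO(n)$-equivariant by construction. Since the decomposition \eqref{eq:CGrassk} of $L^2(\Grass_k)$ is multiplicity free, Schur's lemma yields a scalar $t_\lambda\in\CC$ with $T_k|_{\calH_\lambda}=t_\lambda\,\id|_{\calH_\lambda}$ for every $\lambda\in\Lambda_k^0$, and computing $t_\lambda$ reduces to evaluating $T_kh_\lambda$ at any single point $E_0\in\Grass_k$ where $h_\lambda(E_0)\neq0$.

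A convenient choice is $E_0=\linspan\{e_1,e_3,\dots,e_{2k-1}\}$, equipped with the ordered orthonormal basis $e_1,e_3,\dots,e_{2k-1}$ of $E_0$. Then the coordinates entering the matrices $A[l]$ are $x_{i,j}=\delta_{i,2j-1}$, so each $A[l]$ is the $l\times k$ matrix with $(r,s)$-entry $\delta_{r,s}$; consequently $A[l]A[l]^t=I_l$, $\det(A[l]A[l]^t)=1$, and $h_\lambda(E_0)=1$ regardless of the sign of $m_k$. Hence
\begin{align*}
t_\lambda=(T_kh_\lambda)(E_0)=\int_{\Grass_k}\norm{\cos(E_0,F)}\,h_\lambda(F)\,dF.
\end{align*}

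To compute this integral, the natural move is to lift $F\in\Grass_k$ to the Stiefel manifold of orthonormal $k$-frames $Y\in\RR^{n\times k}$: in these coordinates $\norm{\cos(E_0,F)}$ equals the modulus of the determinant of the $k\times k$ submatrix of $Y$ formed from rows $1,3,\dots,2k-1$, while $h_\lambda(F)$ admits the analogous expression in terms of the top $2l$ rows of $Y$ for each $l\leq k$. Using the Iwasawa-type realization of the Stiefel manifold as an iterated sphere bundle, each layer of integration reduces to a classical beta integral $\int_0^1x^{a-1}(1-x)^{b-1}dx=\Gamma(a)\Gamma(b)/\Gamma(a+b)$, which rewrites as a ratio $(a)_{\norm{m_j}}/(a+b)_{\norm{m_j}}$ of Pochhammer symbols. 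Multiplying across layers produces the claimed product.

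The main obstacle is the combinatorial bookkeeping imposed by Strichartz's recursive formula for $h_\lambda$: keeping track of which exponent $m_l-\norm{m_{l+1}}$ feeds into which sphere fiber, matching the final parameters $a,b$ to the asserted shifts $1+j/2-\norm{m_j}$ and $1+n/2-j/2$, and doing so uniformly over the two sign cases for $m_k$. A conceptually cleaner alternative---essentially the route followed by Zhang---is to first treat $\norm{\cos(E_0,F)}^\alpha$ as a meromorphic family and determine the spectrum of the $\alpha$-cosine transform $T_k^\alpha$ in closed form by Gindikin's generalized beta integral on the cone of positive semi-definite matrices, and only then specialize at $\alpha=1$.
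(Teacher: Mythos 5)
This lemma is not proved in the paper at all: it is quoted as an external result of Zhang \cite{Zhang09} (see also \cite{AleskerGS16}), so there is no internal argument to compare against. Judged on its own terms, your proposal correctly handles the easy reductions: $\SO(n)$-equivariance of $T_k$ plus the multiplicity-free decomposition \eqref{eq:CGrassk} does give $T_k|_{\calH_\lambda}=t_\lambda\,\id$ by Schur's lemma, and your evaluation $h_\lambda(E_0)=1$ at $E_0=\linspan\{e_1,e_3,\dots,e_{2k-1}\}$ is correct (indeed it is exactly the computation the paper performs inside the proof of Lemma \ref{lem:signR}). But everything of substance in the lemma is the explicit Pochhammer product, and that is precisely the part you do not carry out: the claim that the integral $\int_{\Grass_k}\norm{\cos(E_0,F)}\,h_\lambda(F)\,dF$ ``reduces layer by layer to classical beta integrals'' is asserted, not demonstrated. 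The integrand is a product of powers of $\det(A[l]A[l]^t)$ against the absolute value $\norm{\det(\cdot)}$ coming from the kernel, and it does not factor fiberwise over an iterated sphere-bundle presentation of the Stiefel manifold in any obvious way; the absolute value is exactly what makes the cosine transform hard, and it is why the known proofs do not proceed by a direct one-dimensional beta-integral cascade. Note also that several of the factors $\left(1+\frac j2-\norm{m_j}\right)_{\norm{m_j}}$ are negative (this is the whole point of Lemma \ref{cor:signT}), which should make you suspicious of any derivation built purely from manifestly positive beta integrals without sign bookkeeping.

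Your closing remark — pass to the meromorphic family of $\alpha$-cosine transforms, compute its spectrum via Gindikin's beta function on the cone of positive semidefinite matrices, then specialize at $\alpha=1$ — is indeed the correct route and essentially what Zhang and Alesker--Gourevitch--Sahi do, but naming the route is not the same as walking it: the specialization at $\alpha=1$, the normalization of the analytic continuation, and the matching of the resulting Gamma-quotients to the shifts $1+\frac j2-\norm{m_j}$ and $1+\frac n2-\frac j2$ are the actual content. As it stands the proposal establishes only that $T_k$ acts by \emph{some} scalar on $\calH_\lambda$ and that this scalar equals $(T_kh_\lambda)(E_0)$; the formula itself remains unproved, so the gap is exactly the lemma.
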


\subsection{Functional representations of even valuations}

\label{ss:even}

There are two important ways to represent {\it even} smooth valuations in terms of smooth functions on Grassmanians. First, the {\it Crofton map} $\Cr\maps{C^\infty(\Grass_k)}{\Val_k^{0,\infty}}$ is given by
\begin{align}
\label{eq:Cr}
\Cr (f)(K)=\int_{\Grass_k}f(E)\vol_k(\pi_E K) \,d E,\quad f\in C^\infty(\Grass_k),K\in\calK.
\end{align}
It is readily verified that the map is indeed well defined and linear (cf. \cite{AleskerBernstein04}). Importantly,
\begin{theorem}[Alesker, Bernstein \cite{AleskerBernstein04}, see also \cite{Alesker03hard}]
\label{thm:AB}
Let $0\leq k\leq \lfloor \frac n2\rfloor$. The following restriction of the Crofton map is an isomorphism:
\begin{align}
\label{eq:Crsmooth}
\Cr\maps{C^\infty(\Grass_k)\cap\widehat{\bigoplus_{\substack{\lambda\in\Lambda_k^0\\ \norm{\lambda_2}\leq 2}}}\calH_\lambda}{\Val^{0,\infty}_k}.
\end{align}
\end{theorem}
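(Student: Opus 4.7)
The plan is to exploit the \(\SO(n)\)-equivariance of \(\Cr\) together with the Klain embedding for even valuations and the spectral information supplied by Lemma \ref{lem:multipT}. In this way the statement reduces to Schur-type bookkeeping on the isotypic components of the decomposition \eqref{eq:CGrassk}.

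First I would recall the \emph{Klain map}. Given \(\phi\in\Val^{0,\infty}_k\) and \(E\in\Grass_k\), the restriction \(\phi|_E\) is a continuous, translation-invariant, \(k\)-homogeneous, even valuation on \(E\cong\RR^k\), and hence a scalar multiple of \(\vol_k|_E\) by the top-degree Hadwiger theorem. The resulting coefficient \(\mathrm{Kl}_\phi(E)\in\CC\) gives a continuous \(\SO(n)\)-equivariant linear map \(\mathrm{Kl}\maps{\Val^{0,\infty}_k}{C^\infty(\Grass_k)}\), which is injective by Klain's theorem. Testing \(\Cr(f)\) on a Euclidean \(k\)-ball \(B_E\subset E\) and using \eqref{eq:cosEF}, I obtain
\begin{align*}
\mathrm{Kl}_{\Cr(f)}(E)=\frac{1}{\vol_k(B_E)}\int_{\Grass_k}f(F)\,\vol_k(\pi_F B_E)\,d F=(T_kf)(E),
\end{align*}
so that \(\mathrm{Kl}\circ\Cr=T_k\) up to a harmless positive normalisation of the Haar measure on \(\Grass_k\).

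Next I would read off the kernel of \(T_k\) from Lemma \ref{lem:multipT}. For \(\lambda=(2m_1,\dots,2m_k,0,\dots,0)\in\Lambda_k^0\) the factor indexed by \(j=2\) equals \((2-\norm{m_2})_{\norm{m_2}}\), which vanishes precisely when \(\norm{m_2}\geq 2\), i.e.\ \(\norm{\lambda_2}\geq 4\). The dominance \(\norm{m_j}\leq\norm{m_2}\) for \(j\geq 2\) and the half-integer shift in the \(j=1\) factor together prevent any other factor from vanishing once \(\norm{m_2}\leq 1\). Hence \(T_k\) acts as a nonzero scalar on each \(\calH_\lambda\) with \(\norm{\lambda_2}\leq 2\) and kills every remaining isotypic component. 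Combined with the injectivity of \(\mathrm{Kl}\), the identity \(\mathrm{Kl}\circ\Cr=T_k\) yields \(\ker\Cr=\ker T_k\) and thus injectivity of the restriction in \eqref{eq:Crsmooth}.

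The remaining task, which I expect to be the main obstacle, is surjectivity, equivalently the inclusion \(\mathrm{Kl}(\Val^{0,\infty}_k)\subseteq C^\infty(\Grass_k)\cap\widehat\bigoplus_{\norm{\lambda_2}\leq 2}\calH_\lambda\). My plan is to invoke Alesker's irreducibility theorem (Theorem \ref{thm:IT}): the subspace \(V\subseteq\Val^{0,\infty}_k\) of valuations whose Klain function lies in the prescribed sum of isotypic components is closed and contains the nonzero element \(\mu_k\) (since \(\mathrm{Kl}_{\mu_k}\equiv 1\in\calH_{(0,\dots,0)}\)), so once \(V\) is shown to be \(\GL(n,\RR)\)-invariant, irreducibility forces \(V=\Val^{0,\infty}_k\). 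Establishing the \(\GL(n,\RR)\)-invariance of \(V\) does not follow from the \(\SO(n)\)-equivariance of \(\mathrm{Kl}\) alone, and this is the delicate point; I would handle it by first extending \(\Cr\) to a continuous \(\SO(n)\)-equivariant map \(C^{-\infty}(\Grass_k)\to\Val^0_k\) whose image is a nonzero closed \(\GL(n,\RR)\)-invariant subspace, and must therefore equal \(\Val^0_k\) by Theorem \ref{thm:IT}, and then passing back to smooth vectors via the general principle that the smooth-vector functor respects equivariant continuous surjections.
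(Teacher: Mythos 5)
This statement is quoted in the paper as a known result of Alesker and Bernstein; the paper gives no proof of it, so your proposal has to stand on its own. The first half does: the identity $\Klain\circ\Cr=T_k$, the determination of $\ker T_k$ from Lemma \ref{lem:multipT} (the factor $(2-\norm{m_2})_{\norm{m_2}}$ vanishes exactly for $\norm{m_2}\geq2$, and no other factor can vanish when $\norm{m_2}\leq1$), and the injectivity of the restricted Crofton map via Klain's theorem are all correct. This is the easy half of the Alesker--Bernstein theorem.

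The surjectivity argument, however, has genuine gaps beyond being a sketch. First, $\Cr$ does not extend to $C^{-\infty}(\Grass_k)$ with values in $\Val_k^0$: the pairing would require $E\mapsto\vol_k(\pi_EK)$ to be smooth in $E$, which fails for general $K$ (already for polytopes); the correct extension is to signed measures (Crofton measures), not distributions. Second, the image of a continuous linear map is not automatically closed, so Theorem \ref{thm:IT} only yields that the image is \emph{dense} in $\Val_k^0$ --- which is essentially McMullen's conjecture and does not give surjectivity onto $\Val_k^{0,\infty}$. Third, the ``general principle that the smooth-vector functor respects equivariant continuous surjections'' does not exist at this level of generality; what is true is the Casselman--Wallach theorem for canonical globalizations of admissible finitely generated Harish--Chandra modules, and invoking it here requires exhibiting a $\GL(n,\RR)$-action on (a twist of) $C^\infty(\Grass_k)$ for which $\Cr$ is equivariant and for which $C^\infty\cap\widehat\bigoplus_{\norm{\lambda_2}\leq2}\calH_\lambda$ is a subrepresentation, i.e., identifying the image of the cosine transform as a $\GL(n,\RR)$-submodule. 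That identification is precisely the main theorem of \cite{AleskerBernstein04} and is the content you would need to supply. Finally, even granting surjectivity of $\Cr$ on all of $C^\infty(\Grass_k)$, passing to a preimage inside the prescribed isotypic block requires that the $L^2$-orthogonal projection onto $\widehat\bigoplus_{\norm{\lambda_2}\leq2}\calH_\lambda$ preserves smoothness, which is not automatic for a projection onto an infinite sum of isotypic components and also needs justification.
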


Let us point out that since the isomorphism commutes with the natural action of $\SO(n)$, \eqref{eq:Crsmooth} also gives the decomposition of $\Val^{0,\infty}_k$ into irreducible $\SO(n)$ modules. In the sense of the preceding theorem, we shall always denote 
\begin{align}
f_\phi=\Cr^{-1}(\phi),\quad \phi\in\Val^{0,\infty}_k.
\end{align}
In other words, each $\phi\in\Val^{0,\infty}_k$ will be represented uniquely by means of
\begin{align}
\phi(K)=\int_{\Grass_k}f_\phi(E)\vol_k(\pi_E K)\,d E,\quad K\in\K.
\end{align}

Second, the {\it Klain map} $\Klain\maps{\Val_k^{0,\infty}}{C^\infty(\Grass_k)}$ is defined as follows: According to Hadwiger's characterization theorem, the restriction of any $\phi\in\Val_k^{0,\infty}$ to an arbitrary $k$-plane $E\in\Grass_k$ must be a multiple of the Lebesgue measure $\vol_k$ on $E$. Then, one defines $\Klain(\phi)=\Klain_\phi$ by
\begin{align}
\phi|_E=\Klain_\phi(E)\,\vol_k.
\end{align}
It can be shown that this is a well-defined, linear, and $\SO(n)$-equivariant mapping. Importantly, it was shown by Klain \cite{Klain00} that $\Klain$ is injective. As a side note, one has $\Klain\circ\Cr=T_k$ on $C^{\infty}(\Grass_k)$.

We shall need a few more results concerning the expression of some of the algebraic structures introduced in \S\ref{ss:smooth} (restricted to even smooth valuations) in terms of the Crofton and Klain map. First, the Alesker-Fourier transform of $\phi\in\Val_k^{0,\infty}$ admits a simple description in terms of the operation $\perp\maps{\Grass_{n-k}}{\Grass_k}$ of taking the orthogonal complement as follows:
\begin{align}
\label{eq:FFperp}
f_{\FF\phi}=\perp^* f_{\phi}
\end{align}
(see \cite{Alesker03hard,Alesker11}). Second, the Lefschetz map boils down to
\begin{lemma}[Alesker \cite{Alesker04hard}]
Let $0\leq k+l\leq n$. There is $c_{k,l}>0$ such that for any $\phi\in\Val_k^{0,\infty}$,
\begin{align}
\label{eq:Klaintimes}
\Klain_{\phi\cdot \mu_l}=c_{k,l}\,T_{k+l}\circ R_{k,k+l}(f_\phi).
\end{align}
\end{lemma}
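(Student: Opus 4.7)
My plan is to unwind both sides of the identity using Crofton-type integral representations and match them via a change of variables. The argument proceeds in three steps.

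First, I represent both factors by Crofton integrals. By hypothesis $\phi=\Cr(f_\phi)$, and by \eqref{eq:IV} combined with the classical Cauchy formula, the intrinsic volume itself admits a constant-density Crofton representation $\mu_l=\beta_l\int_{\Grass_l}\vol_l(\pi_G\Cdot)\,dG$ for some $\beta_l>0$. Continuity and bilinearity of the Alesker product then yield
\begin{align*}
\phi\cdot\mu_l=\beta_l\int_{\Grass_k}\int_{\Grass_l}f_\phi(F)\bigl[\vol_k(\pi_F\Cdot)\cdot\vol_l(\pi_G\Cdot)\bigr]\,dG\,dF.
\end{align*}

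Second---and this is the core of the argument---I compute the Alesker product of two projection volumes for generic $F\in\Grass_k$, $G\in\Grass_l$ (i.e., $\dim(F+G)=k+l$). Applying the defining identity \eqref{eq:product1} to pairs $K,L$ chosen as suitable flat cylinders over $F$ and $G$ respectively and passing to a limit, one obtains
\begin{align*}
\vol_k(\pi_F\Cdot)\cdot\vol_l(\pi_G\Cdot)=c\,[F,G]\,\vol_{k+l}(\pi_{F+G}\Cdot),
\end{align*}
where $[F,G]\geq0$ is a determinantal Jacobian quantifying the departure of $F\oplus G$ from orthogonality, and $c$ depends only on $k,l,n$. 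Non-generic configurations form a null set in $\Grass_k\times\Grass_l$ and may be disregarded by continuity.

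Third, I pass to Klain functions. From the definition \eqref{eq:cosEF} one has $\Klain_{\vol_{k+l}(\pi_{E'}\Cdot)}(E)=\norm{\cos(E,E')}$ for any $E,E'\in\Grass_{k+l}$, since $\pi_{E'}|_E$ is a linear map whose Jacobian is precisely this cosine. Substituting into the double integral above and changing variables by parametrizing generic pairs $(F,G)$ as $(E',F)$ with $E'=F+G\in\Grass_{k+l}$ and $F\in\Grass_k(E')$, the Jacobian $[F,G]$ is precisely what converts $dG\,dF$ into the product of the Haar measure on $\Grass_{k+l}$ with the Haar probability measure on $\Grass_k(E')$. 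Fubini then delivers
\begin{align*}
\Klain_{\phi\cdot\mu_l}(E)
&=c_{k,l}\int_{\Grass_{k+l}}\norm{\cos(E,E')}\biggl(\int_{\Grass_k(E')}f_\phi(F)\,dF\biggr)dE'\\
&=c_{k,l}\,(T_{k+l}\circ R_{k,k+l})(f_\phi)(E),
\end{align*}
as required.

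The main obstacle is Step 2: establishing the explicit product formula for two projection volumes and identifying the Jacobian $[F,G]$ precisely. This requires a careful computation with the defining relation \eqref{eq:product1}, possibly by first verifying the formula on an explicit family of convex bodies (such as products of lower-dimensional balls or segments) in which both sides reduce to classical mixed-volume identities, and then invoking density. Once this is in hand, the integral manipulation in Step 3 is essentially bookkeeping.
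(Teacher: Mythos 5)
A preliminary remark: the paper does not prove this lemma at all---it is quoted from \cite{Alesker04hard}, and the only original content here is the observation that the constant $c_{k,l}$, traced through Alesker's argument, is real and positive. So your proposal is being measured against the standard external argument, which it does reconstruct in outline (Crofton representation of both factors, a product formula for Crofton data, passage to Klain functions). Steps 1 and 3 are sound: the Cauchy--Kubota representation of $\mu_l$, the identity $\Klain_{\vol_{k+l}(\pi_{E'}\Cdot)}(E)=\norm{\cos(E,E')}$, and the disintegration of the invariant measure on the flag manifold $\{(F,E')\mid F\in\Grass_k(E')\}$ into the Haar measure on $\Grass_{k+l}$ times the Haar probability measure on the fibres are all correct; for the last point you do not even need to compute the Jacobian, since the flag manifold is a homogeneous space of $\SO(n)$ and its invariant measure is unique up to scale---only positivity and integrability of the density matter. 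Completing the argument this way would also deliver $c_{k,l}>0$ for free.

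The genuine gap is in Step 2, and it is not merely computational. For $0<k<n$ and fixed $F$, the valuation $\vol_k(\pi_F\Cdot)$ is \emph{not} smooth: its Klain function $\norm{\cos(\Cdot,F)}$ is not $C^\infty$ and its Crofton measure is a Dirac mass. Hence the product $\vol_k(\pi_F\Cdot)\cdot\vol_l(\pi_G\Cdot)$ is not defined by Theorem \ref{thm:AP}, and the proposed derivation---apply \eqref{eq:product1} to flat cylinders and pass to a limit---cannot be carried out as stated: the cylinders leave the class $\K_+^\infty$, the approximating valuations converge only in the Banach space $\Val$, and the Alesker product does not extend continuously to $\Val$. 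For the same reason, pulling the product inside the double integral in Step 1 presupposes exactly this undefined pointwise product. The correct formulation of Step 2 is at the level of smooth densities: for $f\in C^\infty(\Grass_k)$ and $g\in C^\infty(\Grass_l)$ one has $(\Cr(f)\cdot\Cr(g))(K)=c\int\int f(F)\,g(G)\,[F,G]\,\vol_{k+l}(\pi_{F+G}K)\,dG\,dF$ with $[F,G]$ the generalized sine of the angle between $F$ and $G$. This is a known theorem, but it requires Alesker's actual construction of the product (or the exterior product of valuations), not the characterizing identity \eqref{eq:product1} alone. Once that statement is quoted or proved, your Steps 1 and 3 do finish the proof.
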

Let us point out that the constant $c_{k,l}$ was only specified to be {\it non-zero} in the original version of the previous lemma as formulated in \cite{Alesker04hard}. However, it is easy to trace back through Alesker's proof to see that $c_{k,l}$ is indeed {\it real} and {\it positive}. Finally, as for the Alesker-Poincar\'e pairing,
\begin{lemma}[Bernig, Fu \cite{BernigFu06}]
For $\varphi,\psi\in\Val_k^{0,\infty}$ one has
\label{lem:pd+}
\begin{align}
\label{eq:pd+}
\FF\varphi\cdot\psi=\int_{\Grass_k}f_\varphi(E)\,\Klain_\psi(E)\,d E.
\end{align}
\end{lemma}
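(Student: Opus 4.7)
The strategy is to represent both $\FF\varphi$ and $\psi$ as Crofton integrals and reduce the identity to a single scalar ``Alesker-product cosine'' computation for the pure projection valuations.

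Write $\varphi = \Cr(f_\varphi)$ and $\psi = \Cr(f_\psi)$ via Theorem~\ref{thm:AB}. By the Fourier formula \eqref{eq:FFperp}, $\FF\varphi$ is itself a Crofton integral over $\Grass_{n-k}$ with density $E \mapsto f_\varphi(E^\perp)$. Since the Alesker product is continuous bilinear, and by density of finite sums of mixed volumes $V(\Cdot[n-k], K_1, \dots, K_k)$ and $V(\Cdot[k], L_1, \dots, L_{n-k})$ with $K_i, L_j \in \K_+^\infty$ in $\Val^{0,\infty}_{n-k}$ and $\Val^{0,\infty}_k$, respectively (cf.\ Theorem~\ref{thm:IT}), the product can be brought inside the Crofton integrals. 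After the substitution $F = E^\perp$, this yields
\begin{align*}
\FF\varphi \cdot \psi = \int_{\Grass_k} \int_{\Grass_k} f_\varphi(F)\, f_\psi(E')\, \Phi(F, E')\, dE'\, dF,
\end{align*}
where $\Phi(F, E') \in \Val_n^\infty \cong \CC$ denotes the Alesker product of the projection valuations $A_{F^\perp}(K) = \vol_{n-k}(\pi_{F^\perp} K)$ and $A^{(k)}_{E'}(K) = \vol_k(\pi_{E'} K)$.

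The key step is the scalar identity $\Phi(F, E') = \norm{\cos(F, E')}$. By Cauchy's projection formula, $A_{F^\perp}$ and $A^{(k)}_{E'}$ are positive scalar multiples of the mixed volumes $V(\Cdot[n-k], D_F[k])$ and $V(\Cdot[k], D_{(E')^\perp}[n-k])$, respectively, $D_G$ denoting the unit ball in a subspace $G$. The Fourier transform converts their Alesker product to a Bernig-Fu convolution (Theorem~\ref{thm:AFT}(a)), which by \eqref{eq:MVconv} equals, up to a positive constant, the classical mixed volume $V(D_F[k], D_{(E')^\perp}[n-k])$. Expanding $\vol_n(\lambda D_F + \mu D_{(E')^\perp})$ via principal angles between $F$ and $E'$ evaluates this as $\binom{n}{k}^{-1} \omega_k \omega_{n-k} \norm{\cos(F, E')}$, and the normalizations match. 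Granted this, Fubini together with the relation $\Klain \circ \Cr = T_k$ immediately gives
\begin{align*}
\FF\varphi \cdot \psi = \int_{\Grass_k} f_\varphi(F)\, (T_k f_\psi)(F)\, dF = \int_{\Grass_k} f_\varphi(F)\, \Klain_\psi(F)\, dF.
\end{align*}

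The principal obstacle is the careful bookkeeping of the several normalizing constants coming from Cauchy's formula, the Fourier action $\FF \chi = \vol_n$, and \eqref{eq:MVconv}, as well as a small technical point about pulling the Alesker product inside Crofton integrals whose ``pure'' integrands may not lie in $\Val^\infty$ (to be handled by a smooth approximation argument on the dense subspace of mixed volumes). A more conceptual alternative, circumventing the constant-matching entirely, is to observe that both sides of \eqref{eq:pd+} define continuous, $\SO(n)$-invariant bilinear forms on the multiplicity-free $\SO(n)$-module $\Val_k^{0,\infty}$ (Theorem~\ref{thm:AB}); by Schur's lemma they agree as soon as they agree on a single pair of test vectors per isotypic component $\calH_\lambda$, reducing the proof to a single eigenvalue computation analogous to Lemma~\ref{lem:multipT}.
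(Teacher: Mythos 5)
The paper itself offers no proof of this lemma: it is quoted verbatim from Bernig--Fu \cite{BernigFu06}. So you are supplying an argument where the paper supplies a citation, and I will judge it on its own merits. Your overall route --- represent both factors by Crofton functions, reduce to the Alesker product of two projection valuations, identify that product with the cosine kernel, then finish with Fubini and $\Klain\circ\Cr=T_k$ --- is essentially the classical one, and your key scalar identity $\Phi(F,E')=\norm{\cos(F,E')}$ is correct up to normalization.

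However, the two points you defer are precisely the content of the lemma. First, the projection valuations $\vol_k(\pi_{E'}\,\Cdot)$ and $\vol_{n-k}(\pi_{F^\perp}\,\Cdot)$ are continuous but not smooth, so Theorem~\ref{thm:AP} does not define their product, and the interchange of the product with \emph{both} Crofton integrals is not a routine continuity statement: the product is continuous only for the Fr\'echet topology on $\Val^\infty$, so approximating the delta-Crofton valuations in the Banach norm of $\Val$ does not allow you to pass to the limit in the product. The standard repair keeps one factor smooth: use Alesker's identity $\vol_{n-k}(\pi_E\,\Cdot)\cdot\psi=\Klain_\psi(E^\perp)\,\vol_n$ for $\psi\in\Val_k^{0,\infty}$, interchange the product with the single (Fr\'echet-convergent, vector-valued) Crofton integral representing $\FF\varphi$, and \eqref{eq:pd+} drops out with no constants to chase and no product of two non-smooth valuations. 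Second, since \eqref{eq:pd+} carries no free constant, ``the normalizations match'' must actually be verified; with the Haar probability measures and the conventions of \S\ref{ss:fGr}--\ref{ss:even} this is exactly where such identities acquire stray factors, and deferring it leaves the statement unproved as an exact identity. Two smaller remarks: in your computation of $\Phi$ the relation $\FF(\alpha\cdot\beta)=\FF\alpha*\FF\beta$ requires convolving $\FF\alpha\propto V(\Cdot[k],D_{F^\perp}[n-k])$ with $\FF\beta\propto V(\Cdot[n-k],D_{E'}[k])$ (in your notation for balls in subspaces), not $\alpha$ with $\beta$; this yields $V(D_{F^\perp}[n-k],D_{E'}[k])$ rather than the mixed volume you wrote, which is harmless only because $\norm{\cos(F,E')}=\norm{\cos(F^\perp,(E')^\perp)}$. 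And the Schur-lemma alternative, while sound in outline (the module is multiplicity free and each $\calH_\lambda$ is self-dual), requires one eigenvalue comparison for \emph{every} $\lambda\in\Lambda_k^0$ with $\norm{\lambda_2}\leq2$, which is essentially the computation you were hoping to avoid.
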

Recall that $\Val_n^\infty$ is identified with $\CC$ as usual, i.e., via $\vol_n$.

\subsection{Spherical valuations}

\label{ss:spherical}

There is another broad class of smooth valuations represented faithfully by means of functions, this time on the unit sphere. Recall that
\begin{align}
\label{eq:L2S}
L^2(S^{n-1})=\widehat{\bigoplus_{q\in\NN_0}}\calS_q,
\end{align}
where $\calS_q\subset C^\infty(S^{n-1})$ is the space of spherical harmonics, i.e. restrictions of $q$-homogeneous harmonic polynomials on $\RR^n$. If $f\in L^2(S^{n-1})$ is smooth, then the expansion \eqref{eq:L2S} converges also with respect to the standard Fr\'echet-space structure on $C^\infty(S^{n-1})$ (see \cite{Morimoto1998}, \S2.6).

For $0\leq k\leq n-1$, let further $S_k(K,\Cdot)$ be the $k$-th area measure of a convex body $K\in\K$ (we refer to \S4.2 of \cite{Schneider2014} for its construction). It is well known that the area measures satisfy a Steiner-type formula: For $1\leq k\leq n-1$, there are constants $c_{k,j}>0$ such that
\begin{align}
\label{eq:SteinerSk}
S_k(K+\lambda D,\omega)=\sum_{j=0}^k c_{k,j} \lambda^jS_{k-j}(K,\omega)
\end{align}
holds for any $\lambda>0$, $K\in\calK$, and any Borel subset $\omega\subset S^{n-1}$. It is easy to see that for a smooth function $f\in C^\infty(S^{n-1})$, the valuation defined by
\begin{align}
\label{eq:spherical}
\mu_{k,f}(K)=\int_{S^{n-1}}f(y)dS_k(K,y), \quad K\in\K,
\end{align}
belongs to $\Val^\infty_k$. Valuations of the form \eqref{eq:spherical} are called {\it spherical}. Observe that, in particular, the intrinsic volumes are such, as $\mu_{k,1}$ is proportional to $\mu_k$. In this connection, let us take the liberty of assuming that the area measures are normalized such that
\begin{align}
\label{eq:muk1}
\mu_{k,1}=\mu_k.
\end{align}
Notice also that $\mu_{k,f}=0$ provided $f\in\calS_1$ is the restriction of a linear functional, and that $\mu_{k,f}$ is even/odd if the same is true for $f$.

The following recent results concerning spherical valuations will be useful for us:

\begin{lemma}[Bernig, Hug \cite{BernigHug18}]
\label{lem:spherical}
Let $1\leq k\leq n-1$ and $q,r\in\NN_0$ with $q,r\neq1$.
\begin{enuma}
\item There is a constant $c_{k,q}>0$ such that for any $f\in\calS_q$,
\begin{align}
\label{eq:FFmukf}
\FF\mu_{k,f}=c_{k,q}(\sqrt{-1})^q\mu_{n-k,f}.
\end{align}
\item There is a constant $\tilde c_{k,q}>0$ such that for any $f\in\calS_q$ and $g\in\calS_r$,
\begin{align}
\label{eq:pdmufg}
\mu_{k,f}\cdot\mu_{n-k,g}=\begin{cases}\tilde c_{k,q}(-1)^{q}\left(1-\frac{q(n+q-2)}{n-1}\right)\int_{S^{n-1}}f(y)\,g(y)\,dy&\text{if }q=r,\\ 0&\text{otherwise,}\end{cases}
\end{align}
where $\Val^\infty_n\cong\CC$ as usual.
\end{enuma}
\end{lemma}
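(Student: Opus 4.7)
My plan is to reduce both assertions to Schur's-lemma applications in the representation theory of $\SO(n)$, deferring the explicit constants to separate test computations.

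For part (a), the linear map $f\mapsto\mu_{k,f}$ from $\calS_q$ into $\Val_k^{\infty}$ is $\SO(n)$-equivariant because area measures transform covariantly under rotations, and for $q\neq 1$ it is injective: the only linear relations among integrals of spherical harmonics against the $k$-th area measure are the classical Minkowski relations, which span precisely $\calS_1$. Since $\FF$ is canonical and hence intertwines the $\SO(n)$-actions (Theorem \ref{thm:AFT}), both $f\mapsto\FF\mu_{k,f}$ and $f\mapsto\mu_{n-k,f}$ define $\SO(n)$-intertwiners from the irreducible module $\calS_q$ into $\Val_{n-k}^\infty$. Granted that the $\calS_q$-isotypic component of $\Val_{n-k}^\infty$ has multiplicity one (part of the known $\SO(n)$-decomposition of $\Val_{n-k}^\infty$), Schur's lemma forces proportionality and yields \eqref{eq:FFmukf} up to a single scalar per pair $(k,q)$. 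To see that this scalar factors as $c_{k,q}(\sqrt{-1})^q$ with $c_{k,q}>0$, I would iterate: $\FF^2=(-1)^s\id$ on $\Val^{s,\infty}$ together with the parity of $\mu_{k,f}$ being $q\bmod 2$ forces the product of the scalars for $(k,q)$ and $(n-k,q)$ to equal $(-1)^q$, consistent with the factor $(\sqrt{-1})^{q}$; reality and positivity of $c_{k,q}$ are checked on the constant function (where the identity reduces to $\FF\mu_k=\mu_{n-k}$ from Theorem \ref{thm:AFT}(d)) and extended to higher $q$ by a highest-weight computation.

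For part (b), the bilinear pairing on $\calS_q\times\calS_r$ with values in $\Val_n^\infty\cong\CC$ given by $(f,g)\mapsto\mu_{k,f}\cdot\mu_{n-k,g}$ is $\SO(n)$-invariant by the equivariance \eqref{eq:APequivariance} of the Alesker product restricted to $\O(n)$ together with the equivariance of the spherical-valuation map. Since $\calS_q$ and $\calS_r$ are inequivalent irreducible $\SO(n)$-modules whenever $q\neq r$, Schur forces the pairing to vanish; for $q=r$, the space of invariant bilinear forms on $\calS_q\times\calS_q$ is one-dimensional, spanned by the $L^2$-inner product, which yields \eqref{eq:pdmufg} up to a scalar $\lambda_{k,q}$ to be determined.

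The main obstacle is identifying $\lambda_{k,q}$ explicitly and showing it takes the form $\tilde c_{k,q}(-1)^q\bigl(1-\tfrac{q(n+q-2)}{n-1}\bigr)$. My approach would combine part (a) with the Bernig-Fu Poincar\'e-pairing formula (Lemma \ref{lem:pd+}) in the even case, converting the Alesker product $\mu_{k,f}\cdot\mu_{n-k,g}$ into an integral over $\Grass_{n-k}$ of the Crofton density of $\FF^{-1}\mu_{k,f}$ against the Klain function of $\mu_{n-k,g}$; the spherical-Laplacian eigenvalue $q(n+q-2)$ on $\calS_q$ should then emerge through the Steiner-type expansion \eqref{eq:SteinerSk} and the derivative identity \eqref{eq:convLefschetz} linking convolution by $\mu_{n-1}$ with differentiation in the radial parameter. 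The odd case ($q$ odd) is where the real technical work lies, as neither the Crofton nor the Klain representation is available there; one would have to proceed directly via the differential-cycle definition of the Alesker product or Schneider-type odd densities, which I expect to be the technical heart of the argument.
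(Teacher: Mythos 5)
First, a point of comparison: the paper does not prove Lemma \ref{lem:spherical} at all --- it is imported verbatim from Bernig--Hug \cite{BernigHug18} --- so there is no internal argument to measure your proposal against; the authoritative proof is in that reference. Judged on its own terms, your Schur-lemma skeleton is sound and does match the general strategy used in the literature: the maps $f\mapsto\mu_{k,f}$, $f\mapsto\FF\mu_{k,f}$ and the pairing $(f,g)\mapsto\mu_{k,f}\cdot\mu_{n-k,g}$ are $\SO(n)$-equivariant, and the isotypic component of type $\calS_q$ in $\Val_k^\infty$ is indeed multiplicity-free for $q\neq 1$. Note, however, that this multiplicity-one statement is itself a substantial external input (the Alesker--Bernig--Schuster decomposition of $\Val_k$); the present paper records it only for \emph{even} valuations (Theorem \ref{thm:AB}), so for odd $q$ you are leaning on a theorem you have not named, and the case $n=2$ (where $\calS_q\otimes\CC$ is reducible) would need separate care.

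The genuine gap is that everything the lemma is actually used for --- the explicit constants and, above all, their signs --- is deferred. In part (a), the relation $\FF^2=(-1)^q\id$ only constrains the \emph{product} $\alpha_{k,q}\,\alpha_{n-k,q}=(-1)^q$ of the two proportionality scalars; it does not exclude, say, $\alpha_{k,q}=-c\,(\sqrt{-1})^q$ with $\alpha_{n-k,q}=-c^{-1}(\sqrt{-1})^q$, so the claimed form $c_{k,q}(\sqrt{-1})^q$ with $c_{k,q}>0$ is not established, and ``extended to higher $q$ by a highest-weight computation'' names the missing computation rather than performing it. In part (b), the factor $(-1)^q\bigl(1-\tfrac{q(n+q-2)}{n-1}\bigr)=(-1)^q\tfrac{(1-q)(n-1+q)}{n-1}$ is precisely what the paper needs: its strict negativity (after the sign $(-1)^q=(-1)^s$) for $q\geq 2$ is the entire engine of the proof of Theorem \ref{thm:HR1}. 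Your proposed route via Lemma \ref{lem:pd+} is confined to even valuations, does not obviously produce the Laplace eigenvalue $q(n+q-2)$ (the Klain and Crofton transforms of spherical valuations contribute Gamma-factor eigenvalues, not this one), and you concede that the odd case --- half of what Theorem \ref{thm:HR1} asserts --- remains unaddressed. So the proposal reduces the lemma correctly to a scalar computation but does not carry out the computation that constitutes its content.
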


\begin{lemma}[Alesker \cite{BergEtAl18}]
\label{lem:spherical1}
Let $C_0^\infty(S^{n-1})\subset C^\infty(S^{n-1})$ be the Fr\'echet subspace of functions whose orthogonal projection to $\calS_1$ is trivial. Then the mapping $C_0^\infty(S^{n-1})\rightarrow\Val_1^\infty$ given by
\begin{align}
f\mapsto\mu_{1,f}
\end{align}
is an isomorphism of Fr\'echet spaces.
\end{lemma}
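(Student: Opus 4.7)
The plan is to verify the four standard properties of the map $T\colon f\mapsto\mu_{1,f}$ in order: continuity, $\SO(n)$-equivariance, injectivity, and surjectivity; the Fr\'echet-space character of $T^{-1}$ will then follow from the open mapping theorem. Continuity and equivariance are immediate from the weak continuity of $S_1(K,\Cdot)$ in $K\in\K$ together with the standard seminorm estimates on $\Val^\infty$, and $\SO(n)$-equivariance of the first area measure.

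For injectivity, I would expand $f=\sum_{q\neq 1}f_q$ into spherical harmonics and, using the Alesker-Poincar\'e pairing (Theorem \ref{thm:AP}(e)), test $\mu_{1,f}$ against the spherical valuations $\mu_{n-1,\overline{f_{q_0}}}$ for each fixed $q_0\neq 1$. Lemma \ref{lem:spherical}(b) kills all cross terms and leaves
\begin{equation*}
0 \;=\; \tilde c_{1,q_0}\,(-1)^{q_0}\left(1-\frac{q_0(n+q_0-2)}{n-1}\right)\int_{S^{n-1}}|f_{q_0}(y)|^2\,dy.
\end{equation*}
Since
\begin{equation*}
1-\frac{q(n+q-2)}{n-1} \;=\; -\,\frac{(q-1)(q+n-1)}{n-1}
\end{equation*}
vanishes only at $q=1$, every coefficient on the right is non-zero, forcing $f_{q_0}=0$. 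As $q_0\neq 1$ was arbitrary and $f\in C_0^\infty(S^{n-1})$ kills the $q=1$ component by definition, we conclude $f=0$.

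For surjectivity, I would pass through the Alesker-Fourier transform of Theorem \ref{thm:AFT}. Given $\phi\in\Val_1^\infty$, the valuation $\FF\phi$ lies in $\Val_{n-1}^\infty$, and the classical spherical representation of $(n-1)$-homogeneous smooth valuations---originating in the work of McMullen and of Goodey-Weil on area measures, which can be regarded as the $k=n-1$ analog of the present lemma and is standard in the literature---yields $g\in C_0^\infty(S^{n-1})$ with $\FF\phi=\mu_{n-1,g}$. Decomposing $g=\sum_{q\neq 1}g_q$ and inverting $\FF$ term by term via Lemma \ref{lem:spherical}(a) produces the candidate $\phi=\mu_{1,f}$ with
\begin{equation*}
f \;=\; \sum_{q\neq 1}c_{1,q}^{-1}\,(\sqrt{-1})^{-q}\,g_q .
\end{equation*}
The main obstacle I expect here is verifying that this series converges in the Fr\'echet topology of $C^\infty(S^{n-1})$, which reduces to a polynomial-growth estimate for the eigenvalues $c_{1,q}$ of the Fourier transform on the spherical-harmonic isotypic components of $\Val_1^\infty$; such growth can be read off the Bernig-Hug formulas underlying Lemma \ref{lem:spherical}, or deduced from general regularity of $\SO(n)$-equivariant integral transforms. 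Once surjectivity is established, $T$ is a continuous linear bijection between Fr\'echet spaces, and the open mapping theorem automatically supplies continuity of $T^{-1}$, completing the proof.
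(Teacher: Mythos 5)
A preliminary remark: the paper itself gives no proof of this lemma --- it is quoted as a known result from Alesker's appendix to \cite{BergEtAl18} --- so your proposal can only be judged on its own merits. Your injectivity argument is sound and essentially complete: since $f\mapsto\mu_{1,f}$ and the Alesker product are continuous and the harmonic expansion of $f$ converges in $C^\infty(S^{n-1})$, pairing $\mu_{1,f}=0$ against $\mu_{n-1,\b{f_{q_0}}}$ and invoking \eqref{eq:pdmufg} does force $f_{q_0}=0$ for every $q_0\neq1$, because the factor $1-\tfrac{q(n+q-2)}{n-1}=-\tfrac{(q-1)(q+n-1)}{n-1}$ vanishes only at $q=1$. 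The open-mapping step at the end is also fine once a continuous bijection is in hand.

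The surjectivity half, however, contains two genuine gaps. First, the ``classical spherical representation'' of $\Val_{n-1}^\infty$ that you invoke is, via the Alesker--Fourier transform, a statement of essentially the same depth as the lemma you are proving: McMullen's theorem produces only a \emph{continuous} density $g$ with $\FF\phi=\mu_{n-1,g}$, and upgrading it to $g\in C_0^\infty(S^{n-1})$ with continuous dependence on $\FF\phi$ in the Fr\'echet topology requires an additional argument (e.g.\ identifying the $\O(n)$-smooth vectors of $\Val_{n-1}$ with the $\GL(n,\RR)$-smooth ones, or a Casselman--Wallach-type globalization); you cannot simply cite it as routine without at least naming such an argument. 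Second, the convergence in $C^\infty(S^{n-1})$ of the series $f=\sum_{q\neq1}c_{1,q}^{-1}(\sqrt{-1})^{-q}g_q$ is the analytic crux, not a side issue: it requires that the multipliers $c_{1,q}^{-1}$ grow at most polynomially in $q$, which must be extracted from the explicit Gamma-function expressions in \cite{BernigHug18} (they do have the required behaviour, but this has to be checked, and the appeal to ``general regularity of $\SO(n)$-equivariant integral transforms'' is not a substitute --- equivariance alone does not bound eigenvalue decay). Note that the seemingly softer alternative --- observing that both sides are multiplicity-free $\SO(n)$-modules with the same isotypic types $(q,0,\dots,0)$, $q\neq1$, so that the image contains all $\SO(n)$-finite vectors and is dense --- still leaves you needing closedness of the image, which again comes down to the same eigenvalue estimates or to Casselman--Wallach. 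So the skeleton of your proof is viable, but as written the surjectivity step rests on two unproven assertions.
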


\section{Primitive valuations, the Lefschetz decomposition for valuations,\\and the Alesker-Hodge-Riemann pairing}

Let $0\leq k\leq \lfloor \frac n2\rfloor$. We say that a $k$-homogeneous smooth valuation $\phi\in\Val_k^\infty$ is {\it primitive} if
\begin{align}
\phi\cdot\mu_1^{n-2k+1}=0,
\end{align}
and denote the subspace of all such valuations by $\pVal_k$. We shall also use the following natural notation: $\pVal_k^s=\pVal_k\cap\Val^{s,\infty}$, $s=0,1$. In these terms, an easy consequence of the hard Lefschetz theorem is a version of the {\it Lefschetz decomposition} for valuations. Namely,
\begin{corollary}
\label{cor:LD}
For any $0\leq k\leq \lfloor \frac n2\rfloor$, one has
\begin{align}
\label{eq:LD}
\Val_k^\infty=\bigoplus_{j=0}^k\mu_1^{k-j}\cdot\pVal_j,
\end{align}
and consequently,
\begin{align}
\label{eq:LDpar}
\Val_k^{s,\infty}=\bigoplus_{j=0}^k\mu_1^{k-j}\cdot\pVal_j^s,\quad s=0,1.
\end{align}
\end{corollary}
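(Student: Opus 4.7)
The plan is to model the argument on the classical Lefschetz decomposition for Kähler cohomology, proving the result by induction on $k$ while using Corollary \ref{cor:HLproduct} repeatedly. The base case $k=0$ is free: $\Val_0^\infty=\CC\chi$ and $\chi\cdot\mu_1^{n+1}\in\Val_{n+1}^\infty=0$ by the McMullen grading, so $\pVal_0=\Val_0^\infty$.

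For the inductive step, I would first establish the one-step decomposition
\begin{align}
\label{eq:onestep}
\Val_k^\infty=\pVal_k\oplus\mu_1\cdot\Val_{k-1}^\infty.
\end{align}
To see that the sum exhausts $\Val_k^\infty$, note that since $k-1\leq\lfloor\frac n2\rfloor$, Corollary \ref{cor:HLproduct} guarantees that multiplication by $\mu_1^{n-2k+2}=\mu_1^{n-2(k-1)}$ is an isomorphism $\Val_{k-1}^\infty\rightarrow\Val_{n-k+1}^\infty$. Hence for any $\phi\in\Val_k^\infty$ there is a unique $\psi\in\Val_{k-1}^\infty$ with $\psi\cdot\mu_1^{n-2k+2}=\phi\cdot\mu_1^{n-2k+1}$, and then $\phi-\mu_1\cdot\psi$ is primitive by construction. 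Directness follows from the same isomorphism: if $\mu_1\cdot\psi\in\pVal_k$ then $\psi\cdot\mu_1^{n-2(k-1)}=0$, forcing $\psi=0$.

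Combining \eqref{eq:onestep} with the inductive hypothesis
\begin{align}
\Val_{k-1}^\infty=\bigoplus_{j=0}^{k-1}\mu_1^{k-1-j}\cdot\pVal_j
\end{align}
and multiplying through by $\mu_1$ would give the desired decomposition, provided the sum remains direct after this multiplication. This is automatic: for each $j\leq k-1<\lfloor\frac n2\rfloor$, the map $\mu_1\maps{\Val_j^\infty}{\Val_{j+1}^\infty}$ is injective, since by Corollary \ref{cor:HLproduct} it factors the isomorphism $\mu_1^{n-2j}$. Iterating this injectivity, if $\sum_{j=0}^{k-1}\mu_1^{k-j}\cdot\phi_j=0$ with $\phi_j\in\pVal_j$, one peels off and applies Lefschetz at degree $j$ to conclude $\phi_j=0$ for each $j$. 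Finally, intersecting \eqref{eq:onestep} with $\pVal_k\cap\mu_1\cdot\Val_{k-1}^\infty=\{0\}$ gives the direct sum \eqref{eq:LD}.

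The parity refinement \eqref{eq:LDpar} requires no additional work: since $\mu_1\in\Val_1^{0,\infty}$ is even, Theorem \ref{thm:AP}(c) shows that multiplication by any power of $\mu_1$ preserves parity, so the entire decomposition \eqref{eq:LD} respects the splitting $\Val^\infty=\Val^{0,\infty}\oplus\Val^{1,\infty}$. There is no real obstacle here; the argument is a textbook Lefschetz decomposition, and the only point that needs some care is ensuring that Corollary \ref{cor:HLproduct} is applicable at degree $k-1$, which is why the assumption $k\leq\lfloor\frac n2\rfloor$ is used not only for $\Val_k^\infty$ but throughout the induction.
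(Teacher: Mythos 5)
Your proof is correct and follows essentially the same route as the paper's: the one-step splitting $\Val_k^\infty=\pVal_k\oplus\mu_1\cdot\Val_{k-1}^\infty$ via the surjectivity and injectivity parts of Corollary \ref{cor:HLproduct} at degree $k-1$, followed by induction, with the parity refinement coming for free since multiplication by $\mu_1$ preserves parity. You merely spell out the inductive bookkeeping (directness after multiplying by $\mu_1$) that the paper leaves as ``the rest follows easily by induction.''
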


\begin{proof}
Since the multiplication by $\mu_1$ commutes with the action of $-\id$, \eqref{eq:LDpar} is an immediate consequence of \eqref{eq:LD} in fact. Let us thus show \eqref{eq:LD}. The case $k=0$ is trivial so assume otherwise. Obviously, $\mu_1\cdot\Val_{k-1}^\infty+\pVal_k\subset\Val_k^\infty$. The opposite inclusion follows from the surjectivity part of the hard Lefschetz theorem (Corollary \ref{cor:HLproduct}) in degree $k-1$: For any $\phi\in\Val_k^\infty$, there exists $\psi\in\Val_{k-1}^\infty$ with $\mu_1^{n-2k+2}\cdot\psi=\mu_1^{n-2k+1}\cdot\phi$. Then $\phi=\mu_1\cdot\psi+(\phi-\mu_1\cdot\psi)$ yields the desired decomposition. Finally, if $\phi\in\pVal_k\cap\,\mu_1\cdot\Val_{k-1}^\infty$, then $0=\mu_1^{n-2k+1}\cdot\phi=\mu_1^{n-2k+2}\cdot\psi$ for some $\psi\in\Val_{k-1}^\infty$ which must be zero by the injectivity part of the hard Lefschetz theorem. All in all, we have $\Val_k^\infty=\mu_1\cdot\Val_{k-1}^\infty\oplus\pVal_k$ from which the rest follows easily by induction.
\end{proof}

In order to proceed to discuss the Hodge-Riemann relations in the algebra $\Val^\infty$, let us recall from the introduction that the {\it Alesker-Hodge-Riemann pairing} $Q\maps{\Val_k^\infty\times\Val_k^\infty}{\Val_n^\infty\cong \CC}$ is given by
\begin{align}
\label{eq:AHRP1}
Q(\phi,\psi)=\b\phi\cdot\psi\cdot\mu_1^{n-2k}.
\end{align}
Observe that if $j$ is fixed, the signature of $Q$ is on each subspace $\mu_1^{i}\cdot\pVal_j$, $0\leq i\leq k-j$ the same. The Lefschetz decomposition therefore implies that the signature on the primitive subspaces $\pVal_j$, $0\leq j\leq k$ determines the signature on $\Val_k^\infty$. The same of course applies to $\pVal_j^s$ and $\Val_k^{s,\infty}$.

\section{Hodge-Riemann relations for even valuations}
\label{s:even}

In the section to follow, the Hodge-Riemann relations are proved for even smooth valuations. Our proof relies on representing the valuations in question by smooth functions, and the involved algebraic structures in terms of the Radon and cosine transforms, as summarized in \S\ref{ss:even}. This allows us to deduce the positivity  of \eqref{eq:HReven} from the sign of the transform eigenvalues. Recall that the notation is kept from, in particular, \S\ref{ss:fGr} and \S\ref{ss:even}.

Let $0\leq k\leq \lfloor\frac n2\rfloor$. Since the Lefschetz map, i.e., the multiplication by $\mu_1$,  is $\SO(n)$ equivariant, it follows at once from the Lefschetz decomposition \eqref{eq:LDpar} that the $\SO(n)$ module $P_k^0$ decomposes into irreducible subspaces corresponding to the following highest weights:
\begin{align}
\Pi_k^0=\left\{(m_1,\dots,m_k,0,\dots,0)\in\Lambda_k^0\mid \norm{m_2}\leq2,m_k\neq0\right\}.
\end{align}
Since the Crofton map is equivariant as well, this together with Theorem \ref{thm:AB} means that
\begin{align}
\Cr^{-1}(\pVal_k^0)=C^\infty(\Grass_k)\cap\widehat{\bigoplus_{\lambda\in\Pi_k^0}}\calH_\lambda.
\end{align}

Later on, we shall need to control the sign of the Radon-transform eigenvalues corresponding to the highest weights $\Pi_k^0$. To this end,

\begin{lemma}
\label{lem:signR}
Let $1\leq k\leq \lfloor \frac n2\rfloor$ and $\lambda=(2m,2,\dots,2,\pm2,0,\dots,0)\in\Pi_k^0$. There exists $c_{k,m}>0$ such that
\begin{align}
\perp^*\circ \,R_{k,n-k}|_{\calH_\lambda}=c_{k,m}\, (-1)^{m-1+k} \id|_{\calH_\lambda}.
\end{align}
\end{lemma}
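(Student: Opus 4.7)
The plan is to combine Schur's lemma with an explicit evaluation of the operator on the highest-weight vector of $\calH_\lambda$. Since both $R_{k,n-k}$ and $\perp^*$ are $\SO(n)$-equivariant, and the decomposition \eqref{eq:CGrassk} is multiplicity-free, Schur's lemma forces $\perp^* \circ R_{k,n-k}$ to act on $\calH_\lambda$ by a scalar. That scalar is real because the operator is self-adjoint: its integral kernel is the symmetric indicator of the orthogonality relation on $\Grass_k \times \Grass_k$. It remains to determine its sign, for which it suffices to evaluate $(\perp^* \circ R_{k,n-k}\, h_\lambda)(E_0)$ at a convenient point $E_0 \in \Grass_k$.

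The natural choice is $E_0 = \mathrm{span}(e_1, e_3, \ldots, e_{2k-1})$, for which the orthonormal basis $v_i = e_{2i-1}$ inserted into Strichartz's formula \eqref{eq:A[l]} gives $A[l] A[l]^t = I_l$ for each $l = 1, \ldots, k$, hence $h_\lambda(E_0) = 1$ and the scalar equals
$$
\int_{\Grass_k(E_0^\perp)} h_\lambda(E') \, dE'.
$$
The crucial observation is that $E_0^\perp = \mathrm{span}(e_2, e_4, \ldots, e_{2k}, e_{2k+1}, \ldots, e_n)$ contains no odd-indexed basis vector $e_{2j-1}$ with $j \leq k$. Consequently, for any orthonormal basis $v_1', \ldots, v_k'$ of $E' \subset E_0^\perp$, the coordinates $x'_{2j-1, i}$ all vanish for $j \leq k$, so the matrices $A[l]$ associated to $E'$ factor as $A[l] = \sqrt{-1}\, B[l]$ with $B[l] \in \RR^{l \times k}$ given by $B[l]_{j, i} = x'_{2j, i}$. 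This yields $\det(A[l] A[l]^t) = (-1)^l \det(B[l] B[l]^t)$, a real number whose non-negative part $\det(B[l] B[l]^t)$ is a Gram determinant.

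Substituting into the product expression for $h_\lambda$, the $(-1)^l$-factors telescope: the total sign exponent becomes $\sum_{l=1}^{k-1} l(m_l - |m_{l+1}|) + k|m_k| = \sum_{l=1}^{k} |m_l|$, which for $\lambda = (2m, 2, \ldots, 2, \pm 2, 0, \ldots, 0)$ equals $m + k - 1$. The surviving factor $\det(B[1] B[1]^t)^{m-1} \det(B[k] B[k]^t)$ is non-negative and not identically zero on $\Grass_k(E_0^\perp)$ (the hypothesis $n - k \geq k$ permits generic configurations with non-degenerate $B[l]$), so its integral is strictly positive. This gives the claimed identity $\perp^* \circ R_{k, n-k}|_{\calH_\lambda} = c_{k, m}(-1)^{m-1+k}\,\id|_{\calH_\lambda}$ with $c_{k, m} > 0$. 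The only subtlety — and the main point requiring care in a rigorous write-up — is the $m_k = -1$ branch of Strichartz's formula, where a conjugation is applied to the $A[k]$-factor; since $\det(A[k] A[k]^t) = (-1)^k \det(B[k] B[k]^t)$ is already real on $E_0^\perp$, the conjugation is vacuous and the sign calculation is unaffected.
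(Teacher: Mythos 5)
Your proposal is correct and follows essentially the same route as the paper: Schur's lemma reduces the claim to a scalar, which is then evaluated on the highest-weight vector $h_\lambda$ at $E_0=\mathrm{span}(e_1,e_3,\dots,e_{2k-1})$, using the observation that on $\Grass_k(E_0^\perp)$ the Strichartz matrices become $\sqrt{-1}$ times real matrices, so that $h_\lambda$ has constant sign $(-1)^{m-1+k}$ there and does not vanish identically. The only cosmetic differences are that the paper exhibits the explicit witness $F_0=\RR\{e_2,\dots,e_{2k}\}$ instead of appealing to genericity, and does not need the (correct but unnecessary) self-adjointness remark.
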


\begin{remark}
Observe that the last non-zero entry of $\lambda$ is the $k$-th one.
\end{remark}

\begin{proof}
By Schur's Lemma, there is $\gamma\in\CC$ such that 
\begin{align}
\perp^*\circ \,R_{k,n-k}|_{\calH_\lambda}=\gamma\, \id|_{\calH_\lambda}.
\end{align}
Let $e_j$ be the $j$-th element of the standard orthonormal basis of $\RR^n$. Consider the highest-weight vector $h_\lambda\in\calH_\lambda$ and $E_0\in\Grass_k$ spanned by the (orthonormal) basis $\{e_1,e_3,e_5,\dots,e_{2k-1}\}$. Then
\begin{align*}
A[1]=(1,0,\dots,0),
\end{align*}
and
\begin{align*}
A[k]=\id_k.
\end{align*}
Therefore,
\begin{align*}
h_\lambda(E_0)&=\det\left(A[1]A[1]^t\right)^{m-1}\,\det\left(A[k]A[k]^t\right)=1,
\end{align*}
and, consequently, one has
\begin{align}
\label{eq:gamma}
\gamma=\left[\perp^*\circ \,R_{k,n-k}(h_\lambda)\right](E_0)=\int_{\Grass_{k}(E_0^\perp)}h_\lambda(F)\,d F.
\end{align}
Consider an arbitrary $F\in\Grass_k(E_0^\perp)$. Any orthonormal basis of $F$ must be of the following form:
\begin{align*}
\begin{pmatrix}0\\x_{2,1}\\0\\x_{4,1}\\\vdots \\0\\x_{2k,1}\\ *\end{pmatrix},\dots,\begin{pmatrix}0\\x_{2,k}\\0\\x_{4,k}\\\vdots \\0\\x_{2k,k}\\ *\end{pmatrix}.
\end{align*}
Here `$*$' stands for the remaining part of a vector which is insignificant to us. For such a basis,
\begin{align*}
A[1]=\sqrt{-1}(x_{2,1},x_{2,2},\dots,x_{2,k})\in\CC^{1\times k},
\end{align*}
and
\begin{align*}
A[k]=\sqrt{-1}\,X,\quad\text{where }X=\begin{pmatrix}x_{2,1}&\cdots&x_{2,k}\\\vdots&&\vdots\\x_{2k,1}&\cdots&x_{2k,k}\end{pmatrix}\in\RR^{k\times k}.
\end{align*}
Hence, for some $c_{k,m,F}\geq 0$,
\begin{align*}
h_\lambda(F)&=\det\left(A[1]A[1]^t\right)^{m-1}\,\det\left(A[k]A[k]^t\right) \\
&=(-1)^{m-1}\left(\sum_{j=1}^k(x_{2,j})^2\right)^{m-1}(-1)^k\,(\det X)^2\\
&=c_{k,m,F}\,(-1)^{m-1+k}.
\end{align*}
Plugging this into \eqref{eq:gamma} and taking into account that $h_\lambda$ is continuous and that for
\begin{align*}
F_0=\RR\{e_2,e_4,\dots,e_{2k}\}\in\Grass_k(E_0^\perp)
\end{align*}
one has
\begin{align*}
h_\lambda(F_0)=(-1)^{m-1+k}\neq0,
\end{align*}
we finally obtain that, for some $c_{k,m}>0$,
\begin{align*}
\gamma=c_{k,m}\,(-1)^{m-1+k}.
\end{align*}
\end{proof}

As no similarly simple argument is known to us to prove a counterpart statement for the cosine transform, we deduce it as a consequence of Lemma \ref{lem:multipT}.
\begin{lemma}
\label{cor:signT}
Let $1\leq k\leq \lfloor \frac n2\rfloor$ and $\lambda=(2m,2,\dots,2,\pm2,0,\dots,0)\in\Pi_k^0$. There exists $c_{k,m}>0$ such that
\begin{align}
\label{eq:signT}
\perp^*\circ \,T_{n-k}\,\circ\perp^*|_{\calH_\lambda}=c_{k,m}\,(-1)^{m-1}\, \id|_{\calH_\lambda}.
\end{align}
\end{lemma}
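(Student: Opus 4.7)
My plan is to reduce the statement to a direct application of Lemma~\ref{lem:multipT} by first establishing the clean identity $\perp^* \circ T_{n-k} \circ \perp^* = T_k$ on $C^\infty(\Grass_k)$, and then tracking signs in the resulting Pochhammer product.

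To prove the reduction identity, I would substitute $g = \perp^* f$ (with $f \in C^\infty(\Grass_k)$) into the defining integral for $T_{n-k}(g)(F)$ and change variables via $E' = F'^\perp$; this yields an integral over $\Grass_k$ with kernel $|\cos(F, E'^\perp)|$. The classical duality of subspace angles, $|\cos(F, E'^\perp)| = |\cos(F^\perp, E')|$ (valid because both pairs consist of subspaces of equal dimension), converts this into $T_k(f)(F^\perp)$. Applying $\perp^*$ once more and using $\perp \circ \perp = \id$ gives the claimed identity. As a sanity check one could also see this abstractly: by $\SO(n)$-equivariance and the multiplicity-freeness of \eqref{eq:CGrassk}, the composition acts as a scalar on each $\calH_\lambda$, and matching against $T_k$ on a single convenient element would suffice.

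With the reduction in hand, I apply Lemma~\ref{lem:multipT} to $T_k$ restricted to $\calH_\lambda$ for our $\lambda = (2m, 2, \ldots, 2, \pm 2, 0, \ldots, 0) \in \Pi_k^0 \subseteq \Lambda_k^0$. In the notation of that lemma, $|m_1| = m$ and $|m_j| = 1$ for $j = 2, \ldots, k$; hence the factor for each $j \geq 2$ reduces to $\frac{j/2}{(n - j + 2)/2}$, which is manifestly positive, and the denominator $((n+1)/2)_m$ of the $j = 1$ factor is positive as well. Therefore the sign of the eigenvalue is governed entirely by the numerator $(3/2 - m)_m$.

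Finally, $(3/2 - m)_m = \prod_{i = 0}^{m-1}(3/2 - m + i)$ consists of factors growing in unit steps from $3/2 - m$ (at $i = 0$) up to $1/2$ (at $i = m - 1$); those with $i \leq m - 2$ are strictly negative and the one with $i = m - 1$ equals $1/2 > 0$, giving exactly $m - 1$ negative factors. Hence $(3/2 - m)_m$ has sign $(-1)^{m-1}$, and the lemma follows with $c_{k,m}$ equal to the absolute value of the full product. The only non-routine step is the reduction identity; once that is available, everything else is a direct application of Lemma~\ref{lem:multipT} together with careful bookkeeping of signs in the Pochhammer symbol.
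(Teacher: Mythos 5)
Your proposal is correct and follows essentially the same route as the paper: reduce to $\perp^*\circ T_{n-k}\circ\perp^* = T_k$ (which the paper simply asserts as an observation, while you supply the change-of-variables and angle-duality justification) and then read off the sign from the Pochhammer factor $\left(\tfrac32-m\right)_m$ in Lemma~\ref{lem:multipT}, whose $m-1$ negative half-integer factors give exactly $(-1)^{m-1}$. No gaps.
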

\begin{proof}
We observe that $\perp^*\circ \,T_{n-k}\,\circ\perp^*=T_k$ and apply Lemma \ref{lem:multipT} for $m_1=m$ and $m_2=\dots=m_{k-1}=\norm{m_k}=1$. In this case,
\begin{align*}
\prod_{j=1}^k\frac{\left(1+\frac j2-\norm{m_j}\right)_{\norm{m_j}}}{\left(1+\frac n2-\frac j2\right)_{\norm{m_j}}}=\frac{\left(\frac 32-m\right)_{m}}{\left(\frac12+\frac n2\right)_{m}}\,\prod_{j=2}^{k}\frac{\frac j2}{1+\frac n2-\frac j2}.
\end{align*}
Since
\begin{align*}
\frac{1}{\left(\frac12+\frac n2\right)_{m}}\,\prod_{j=2}^{k}\frac{\frac j2}{1+\frac n2-\frac j2}>0
\end{align*}
and, for some $c_m>0$,
\begin{align*}
\left(\frac 32-m\right)_{m}=\left(\frac 32-m\right)\,\left(\frac 32-m+1\right)\cdots\left(-\frac12\right)\,\frac12=c_m\, (-1)^{m-1},
\end{align*}
\eqref{eq:signT} follows.
\end{proof}

We can now proceed to the proof of our first main result.
\begin{proof}[Proof of Theorem \ref{thm:HReven}]
First, let $\lambda=(2m,2,\dots,2,\pm2,0,\dots,0)\in\Pi_k^0$. Since $\perp^2=\id$, Lemma \ref{lem:signR} and Lemma \ref{cor:signT} together imply
\begin{align}
\label{eq:signTR}
T_{n-k}\circ R_{k,n-k}|_{\calH_\lambda}=c_{k,m}\, (-1)^k\, \perp^*|_{\calH_\lambda},
\end{align}
for some $c_{k,m}>0$.

Now, take any non-zero $\phi\in\pVal_k^0$ and consider the $L^2$-orthogonal decomposition
\begin{align}
\label{eq:fphi}
f_\phi=\sum_{\lambda\in\Pi_k^0}f_\phi^{(\lambda)},\quad f_\phi^{(\lambda)}\in\calH_\lambda.
\end{align}
We shall use Lemma \ref{lem:pd+} for $\varphi=\FF(\b\phi)$ and $\psi=\phi\cdot \mu_{n-2k}$. Recall also that $\mu_1^{n-2k}=c_k\mu_{n-2k}$, for some $c_k>0$, according to Theorem \ref{thm:AP} (d). All in all, we have
\begin{align*}
Q(\phi,\phi)&=\b\phi\cdot\phi\cdot\mu_1^{n-2k}\\
&=c_k\,\b\phi\cdot(\phi\cdot\mu_{n-2k})\\
\overset{\eqref{eq:pd+}}&{=}c_k\int_{\Grass_{n-k}}f_{\FF(\b\phi)}(E)\,\Klain_{\phi\cdot \mu_{n-2k}}(E)\,d E\\
\overset{\eqref{eq:Klaintimes}}&{=}\tilde c_k\int_{\Grass_{n-k}}f_{\FF(\b\phi)}(E)\,\left[T_{n-k}\circ R_{k,n-k}(f_\phi)\right](E)\,d E\\
\overset{\eqref{eq:FFperp}}&{=}\tilde c_k\int_{\Grass_{n-k}}\b{f_\phi(E^\perp)}\,\left[T_{n-k}\circ R_{k,n-k}(f_\phi)\right](E)\,d E\\
\overset{\eqref{eq:fphi}}&{=}\tilde c_k\sum_{\lambda',\lambda\in\Pi_k^0}\int_{\Grass_{n-k}}\b{f^{(\lambda')}_\phi(E^\perp)}\,\left[T_{n-k}\circ R_{k,n-k}\left(f^{(\lambda)}_\phi\right)\right](E)\,d E\\
\overset{\eqref{eq:signTR}}&{=}(-1)^k\sum_{\lambda',\lambda\in\Pi_k^0}c_{k,\lambda}\int_{\Grass_{n-k}}\b{f^{(\lambda')}_\phi(E^\perp)}\, f^{(\lambda)}_\phi(E^\perp)\,d E\\
&=(-1)^k\sum_{\lambda',\lambda\in\Pi_k^0}c_{k,\lambda}\int_{\Grass_{k}}\b{f^{(\lambda')}_\phi(F)}\, f^{(\lambda)}_\phi(F)\,d F\\
&=(-1)^k\sum_{\lambda\in\Pi_k^0}c_{k,\lambda}\int_{\Grass_{n-k}}\sqnorm{f^{(\lambda)}_\phi(F^\perp)}\,d F,
\end{align*}
for some $c_k,\tilde c_k,c_{k,\lambda}>0$. Since $f_\phi\neq0$, there is $\lambda\in\Pi_k^0$ with $f_\phi^{(\lambda)}\neq0$, and consequently,
\begin{align*}
(-1)^k\,Q(\phi,\phi)>0,
\end{align*}
as desired.
\end{proof}

\section{Hodge-Riemann relations for 1-homogeneous valuations}

The purpose of this section is to prove the Hodge-Riemann relation for smooth valuations of degree 1, regardless of parity. Using a different, yet analogous argumentation as in the even case considered in \S\ref{s:even}, the proof is based on a functional representation of these valuations, in particular, on Alesker's characterization theorem (Lemma \ref{lem:spherical1}) and recent results on spherical valuations due to Bernig and Hug (Lemma \ref{lem:spherical}). Recall that the notation is kept from \S\ref{ss:spherical}.

\begin{proposition}
Let $1\leq k\leq n$. There is $c_k>0$ such that for any $f\in C^\infty(S^{n-1})$, one has
\begin{align}
\label{eq:convLmukf}
\mu_{n-1}*\mu_{k,f}=c_k\,\mu_{k-1,f}.
\end{align}
\end{proposition}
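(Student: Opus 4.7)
The statement is a direct consequence of Lemma \ref{lem:conv}(b) combined with the Steiner-type formula \eqref{eq:SteinerSk} for the area measures. The plan is therefore almost mechanical; the only substantive point to check is that the constant that appears has the correct sign.

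First, I would apply formula \eqref{eq:convLefschetz} to $\phi = \mu_{k,f}$, which yields, for any $K \in \K$,
\begin{align*}
(\mu_{n-1} * \mu_{k,f})(K) = \tfrac{1}{2}\,\frac{d}{d\lambda}\bigg|_{\lambda = 0}\mu_{k,f}(K + \lambda D) = \tfrac{1}{2}\,\frac{d}{d\lambda}\bigg|_{\lambda = 0} \int_{S^{n-1}} f(y) \, dS_k(K + \lambda D, y).
\end{align*}
Substituting the Steiner-type expansion \eqref{eq:SteinerSk} and interchanging the integral with the finite sum gives
\begin{align*}
\int_{S^{n-1}} f(y) \, dS_k(K + \lambda D, y) = \sum_{j=0}^{k} c_{k,j} \lambda^{j} \int_{S^{n-1}} f(y) \, dS_{k-j}(K, y) = \sum_{j=0}^{k} c_{k,j} \lambda^{j} \mu_{k-j, f}(K).
\end{align*}
Differentiating in $\lambda$ at $0$ picks out the $j = 1$ term, so
\begin{align*}
(\mu_{n-1} * \mu_{k,f})(K) = \tfrac{1}{2} c_{k,1} \, \mu_{k-1,f}(K),
\end{align*}
and one simply sets $c_k = c_{k,1}/2$, which is strictly positive because the coefficients $c_{k,j}$ in \eqref{eq:SteinerSk} are positive by assumption.

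There is no real obstacle here; the only minor technicality is to justify exchanging the integral with the differentiation and the finite sum, which is immediate because \eqref{eq:SteinerSk} is a polynomial identity in $\lambda$ holding setwise for every Borel $\omega \subset S^{n-1}$, so the integrated identity is an honest polynomial in $\lambda$ whose derivative at $0$ is read off term by term.
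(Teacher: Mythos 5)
Your proof is correct and follows exactly the paper's own argument: apply formula \eqref{eq:convLefschetz} to $\mu_{k,f}$, insert the Steiner-type expansion \eqref{eq:SteinerSk}, and read off the coefficient of $\lambda$, whose positivity gives $c_k>0$. The additional remark on interchanging the derivative with the integral is a fair (if minor) point of rigor that the paper leaves implicit.
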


\begin{proof}
Using the formulas \eqref{eq:convLefschetz} and \eqref{eq:SteinerSk}, respectively, we indeed have
\begin{align*}
\mu_{n-1}*\mu_{k,f}(K)&=\frac12\int_{S^{n-1}}f(y)\,\frac{d}{d\lambda}\bigg|_{\lambda=0}d S_k(K+\lambda D,y)\\
&=c_k\int_{S^{n-1}}f(y)\,d S_{k-1}(K,y)\\
&=c_k\,\mu_{k-1,f}(K)
\end{align*}
for any $K\in\calK$ and some constant $c_k>0$.
\end{proof}

\begin{corollary}
Let $0\leq k\leq n-1$ and $q\in\NN_0$ with $q\neq1$. There is $c_{k,q}>0$ such that for any $f\in \calS_q$, one has
\begin{align}
\label{eq:productLmukf}
\mu_1\cdot\mu_{k,f}=c_{k,q}\,\mu_{k+1,f}.
\end{align}
\end{corollary}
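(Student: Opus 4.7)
The plan is to deduce \eqref{eq:productLmukf} from the preceding \eqref{eq:convLmukf} by pushing the identity through the Alesker-Fourier transform, which converts the Alesker product into the Bernig-Fu convolution. Since the intrinsic volumes are self-dual under $\FF$ with $\FF\mu_1=\mu_{n-1}$ (Theorem \ref{thm:AFT}(d)), and spherical valuations of pure spherical degree $q\neq 1$ are mapped to spherical valuations on the complementary Grassmann degree up to the explicit scalar $c_{k,q}(\sqrt{-1})^q$ (Lemma \ref{lem:spherical}(a)), the preceding proposition provides exactly the identity needed for the transformed objects.

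Concretely, I would proceed in three steps. First, apply $\FF$ to the left-hand side of \eqref{eq:productLmukf}: by Theorem \ref{thm:AFT}(a),(d),
\begin{align*}
\FF(\mu_1\cdot\mu_{k,f})=\mu_{n-1}*\FF\mu_{k,f},
\end{align*}
and by Lemma \ref{lem:spherical}(a), $\FF\mu_{k,f}=c_{k,q}(\sqrt{-1})^q\,\mu_{n-k,f}$ with $c_{k,q}>0$. Second, invoke the preceding proposition \eqref{eq:convLmukf} with $k$ replaced by $n-k$ to obtain $\mu_{n-1}*\mu_{n-k,f}=c_{n-k}\,\mu_{n-k-1,f}$ for some $c_{n-k}>0$. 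Combining these,
\begin{align*}
\FF(\mu_1\cdot\mu_{k,f})=c_{k,q}\,c_{n-k}\,(\sqrt{-1})^q\,\mu_{n-k-1,f}.
\end{align*}

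Third, apply $\FF$ once more. Lemma \ref{lem:spherical}(a) yields $\FF\mu_{n-k-1,f}=c_{n-k-1,q}(\sqrt{-1})^q\,\mu_{k+1,f}$, while Theorem \ref{thm:AFT}(c) gives $\FF^2(\mu_1\cdot\mu_{k,f})=(-1)^q\,\mu_1\cdot\mu_{k,f}$, since $\mu_{k,f}$ inherits the parity of the spherical harmonic $f\in\calS_q$ and $\mu_1$ is even. Putting everything together,
\begin{align*}
(-1)^q\,\mu_1\cdot\mu_{k,f}=c_{k,q}\,c_{n-k}\,c_{n-k-1,q}\,(\sqrt{-1})^{2q}\,\mu_{k+1,f},
\end{align*}
and since $(\sqrt{-1})^{2q}=(-1)^q$ precisely cancels the sign on the left, the claim follows with positive constant $c_{k,q}\,c_{n-k}\,c_{n-k-1,q}>0$.

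The argument is essentially bookkeeping once the three key inputs (Fourier multiplicativity, the Fourier formula for spherical valuations, and the convolution identity) are in place; the only point one needs to double-check is that the hypotheses of Lemma \ref{lem:spherical}(a) are satisfied in both applications (which they are: the relevant spherical degree is $q\neq 1$ in both cases) and that the $q$-dependent phases combine correctly. The main ``obstacle'' is purely algebraic---tracking the factors of $\sqrt{-1}$ through two applications of $\FF$ and verifying they conspire with $\FF^2=(-1)^q$ to produce a real positive constant---which works out exactly because $(\sqrt{-1})^{2q}=(-1)^q$.
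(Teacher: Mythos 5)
Your proposal is correct and follows essentially the same route as the paper: conjugate by the Alesker--Fourier transform to turn the product with $\mu_1$ into convolution with $\mu_{n-1}$, apply \eqref{eq:convLmukf}, and transform back, with the factors $(\sqrt{-1})^q$ from \eqref{eq:FFmukf} cancelling. The only cosmetic difference is that you apply $\FF$ twice and invoke $\FF^2=(-1)^s\id$, whereas the paper applies $\FF^{-1}$ once; the bookkeeping is identical.
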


\begin{proof}
According to the previous proposition and Theorem \ref{thm:AFT}, we have
\begin{align*}
\mu_1\cdot\mu_{k,f}&= \FF^{-1}\left(\FF\mu_1*\FF\mu_{k,f}\right)\\
\overset{\eqref{eq:FFmukf}}&{=}\tilde c_{k,q}\,(\sqrt{-1})^q\,\FF^{-1}\left(\mu_{n-1}*\mu_{n-k,f}\right)\\
\overset{\eqref{eq:convLmukf}}&{=}\hat c_{k,q}\,(\sqrt{-1})^q\,\FF^{-1}\mu_{n-k-1,f}\\
\overset{\eqref{eq:FFmukf}}&{=}c_{k,q}\,\mu_{k+1,f},
\end{align*}
for some constants $\tilde c_{k,q},\hat c_{k,q},c_{k,q}>0$.
\end{proof}

We are now in position to prove the second main result of this article.
\begin{proof}[Proof of Theorem \ref{thm:HR1}]
Take an arbitrary non-zero $\phi\in\pVal_1^s$. First, according to Lemma \ref{lem:spherical1}, $\phi=\mu_{1,f}$ for some $f=\sum_q f^{(q)}$, where $f^{(q)}\in\calS_q$, and the sum extends over all $q\in\NN_0$ with $q\equiv s\mod 2$ and $q\neq 1$. That $\phi$ is primitive means that
\begin{align*}
0&=\mu_{1,f}\cdot\mu_{n-1}\\
\overset{\eqref{eq:muk1}}&{=}\sum_q\mu_{1,f^{(q)}}\cdot\mu_{n-1,1}\\
\overset{\eqref{eq:pdmufg}}&{=}\mu_{1,f^{(0)}}\cdot\mu_{n-1,1}\\
\overset{\eqref{eq:pdmufg}}&{=} c\int_{S^{n-1}}f^{(0)}(y)\,dy,
\end{align*}
for some $c>0$. Since $f^{(0)}\in\calS_0$ is constant, this implies $f^{(0)}=0$. So we in fact have $f=\sum_q f^{(q)}$ with $q\equiv s\mod 2$ and $q\geq 2$. Then,
\begin{align*}
Q(\phi,\phi)&=\b{\mu_{1,f}}\cdot\mu_{1,f}\cdot\mu_1^{n-2}\\
&=\sum_{q,r}\mu_{1,\b{f^{(q)}}}\cdot\mu_{1,f^{(r)}}\cdot\mu_1^{n-2}\\
\overset{\eqref{eq:productLmukf}}&{=}\sum_{q,r}c_r\,\mu_{1,\b{f^{(q)}}}\cdot\mu_{n-1,f^{(r)}}\\
\overset{\eqref{eq:pdmufg}}&{=}\sum_{q}\tilde c_q\,(-1)^q\left(1-\frac{q(n+q-2)}{n-1}\right)\int_{S^{n-1}}\norm{f^{(q)}(y)}^2dy,
\end{align*}
for some $c_r,\tilde c_q>0$. Observe that $(-1)^q=(-1)^s$ and
\begin{align*}
1-\frac{q(n+q-2)}{n-1}=\frac{(1-q)(n-1+q)}{n-1}
\end{align*}
is always negative for $q,n\geq2$. Finally, since $\phi\neq0$, there is $q_0$ with $f^{(q_0)}\neq0$, and therefore
\begin{align*}
(-1)^s\, Q(\phi,\phi)<0.
\end{align*}
\end{proof}

\begin{remark}
Let us point out that the method of the current section is, alas, not sufficient to obtain even partial results in higher degrees as all spherical valuations then obviously lie in the image of the Lefschetz map, i.e., they are not primitive unless trivial.
\end{remark}

\section{Hodge-Riemann relations in the language of the Bernig-Fu convolution}

We now reformulate both the proved and the conjectured Hodge-Riemann relations in terms of the other canonical multiplicative structure on $\Val^\infty$. As we shall see, the setting of the Bernig-Fu convolution is more suitable for applications to isoperimetric inequalities, and leads the way to further generalizations.

Let $0\leq k\leq \lfloor \frac n2\rfloor$. We say that a valuation $\phi\in\Val_{n-k}^\infty$ of co-degree $k$ is {\it co-primitive} if
\begin{align}
\label{eq:co-primitive}
\phi*\mu_{n-1}^{*(n-2k+1)}=0,
\end{align}
and denote the subspace of all such by $\cpVal_{n-k}$. We also write $\cpVal_{n-k}^s=\cpVal_{n-k}\cap\Val^{s,\infty}$, $s=0,1$. Observe that Theorem \ref{thm:AFT} yields at once
\begin{align}
\label{eq:cpVal}
\cpVal^s_{n-k}=\FF\pVal_k^s.
\end{align}
In analogy with \eqref{eq:AHRP1} and in agreement with the introduction, we define the {\it Bernig-Fu-Hodge-Riemann pairing} $\wt Q\maps{\Val_{n-k}^\infty\times\Val_{n-k}^\infty}{\CC}$ by
\begin{align}
\label{eq:Qtilda}
\wt Q(\phi,\psi)=\b\phi*\psi*\mu_{n-1}^{*(n-2k)}.
\end{align}

The key to understand the (perhaps unexpected) dependence of \eqref{eq:HR} on the parity of a valuation turns out to be the following observation, whose proof is due to S.~Alesker:

\begin{lemma}
\label{lem:Freal}
Let $0\leq k\leq n$ and let $\phi\in\Real\Val_k^\infty$ be a real-valued valuation.
\begin{enuma}
\item If $\phi$ is even, then $\FF\phi$ is real valued.
\item If $\phi$ is odd, then $\FF\phi$ is purely imaginary valued.
\end{enuma}
\end{lemma}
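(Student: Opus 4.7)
The plan is to unify both parts by introducing the $\CC$-linear operator $A\maps{\Val^\infty}{\Val^\infty}$ given by $A\phi=\b{\FF(\b\phi)}$; note that if $\phi$ is real valued of parity $s$, then $A\phi=\b{\FF\phi}$, so the two assertions of the lemma will follow at once from the identity $A=(-1)^s\,\FF$ on $\Val^{s,\infty}$.

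First I would observe that $A$ preserves the grading by degree and parity, is continuous, and is $\GL(n,\RR)$-equivariant, each of these inheriting directly from $\FF$ together with the obvious compatibility of complex conjugation with all of these structures. A quick computation using $\FF^2=(-1)^s\id$ on $\Val^{s,\infty}$ (Theorem \ref{thm:AFT}(c)) gives $A^2=(-1)^s\id$ on $\Val^{s,\infty}$ as well.

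Next I would apply Schur's lemma: by the irreducibility theorem (Theorem \ref{thm:IT}), $\Val_k^{s,\infty}$ admits no proper closed $\GL(n,\RR)$-invariant subspace, so the continuous equivariant endomorphism $\FF^{-1}\circ A$ of $\Val_k^{s,\infty}$ must be a scalar $\lambda(k,s)\in\CC$, and squaring $A=\lambda(k,s)\,\FF$ shows $\lambda(k,s)\in\{+1,-1\}$. It then remains to fix the sign in each parity by one explicit computation. For $s=0$, taking $\phi=\mu_k$ (real and even) yields $A\mu_k=\b{\FF\mu_k}=\b{\mu_{n-k}}=\mu_{n-k}=\FF\mu_k$, hence $\lambda(k,0)=+1$. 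For $s=1$---where we may assume $1\le k\le n-1$, since otherwise $\Val_k^{1,\infty}=\{0\}$ and the claim is vacuous---I would pick any real $f\in\calS_q$ with $q\ge 3$ odd; the spherical valuation $\mu_{k,f}$ is then real and odd, and it is non-zero because $\mu_{k,f}\cdot\mu_{n-k,f}\ne 0$ by Lemma \ref{lem:spherical}(b). Now Lemma \ref{lem:spherical}(a) gives $\FF\mu_{k,f}=c_{k,q}(\sqrt{-1})^q\mu_{n-k,f}$, which is purely imaginary since $q$ is odd, whence $A\mu_{k,f}=-\FF\mu_{k,f}$ and $\lambda(k,1)=-1$.

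The only point requiring a bit of care is the application of Schur's lemma in the Fréchet setting, which relies on Alesker's topological irreducibility together with standard facts about continuous equivariant maps between topologically irreducible smooth representations; beyond this, no further obstacles arise.
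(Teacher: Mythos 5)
Your argument is correct, but it takes a genuinely different route from the paper's. The paper cites Alesker for part (a) and proves part (b) by a density argument: after reducing to $k\le\lfloor\frac n2\rfloor$ via $\FF^2=-\id$ and settling $k=1$ through \eqref{eq:FFmukf} and Lemma \ref{lem:spherical1}, it observes that the real products $\phi\cdot\psi$ with $\phi\in\Real\Val_1^{1,\infty}$ and $\psi\in\Real\Val_{k-1}^{0,\infty}$ span a dense invariant subspace of $\Real\Val_k^{1,\infty}$ (by the irreducibility theorem) on which the claim holds because $\FF(\phi\cdot\psi)=\FF\phi*\FF\psi$, and then concludes by continuity of $\FF$. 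You instead apply Schur's lemma to $\FF^{-1}\circ A$ with $A\phi=\overline{\FF(\overline{\phi})}$ and pin down the scalar by one explicit evaluation in each parity; this is more uniform (no induction on the degree, and part (a) comes for free rather than by citation), but it shifts the burden onto the Fr\'echet--Schur lemma, which needs admissibility of $\Val_k^{s,\infty}$ as a Casselman--Wallach representation and not merely topological irreducibility---you flag this, and it is indeed standard in this setting, whereas the paper's density argument needs only the closed-invariant-subspace form of irreducibility plus continuity. Two points to tighten: (i) $\FF$ itself is only $\O(n)$-equivariant for the standard action (its $\GL(n,\RR)$-equivariance is twisted); since complex conjugation commutes with $\phi\mapsto\phi\circ g^{-1}$, the operator $A$ carries the same twist as $\FF$, so it is the composite $\FF^{-1}\circ A$ that is genuinely equivariant for the standard action---this is exactly what Schur requires, but it deserves a sentence. (ii) Since $A$ and $\FF$ send degree $k$ to degree $n-k$, squaring yields $\lambda(k,s)\,\lambda(n-k,s)=1$ rather than $\lambda(k,s)^2=1$; this slip is harmless, because your explicit computations with $\mu_k$ and with $\mu_{k,f}$ for real $f\in\calS_q$, $q\ge3$ odd (nonzero by \eqref{eq:pdmufg}), determine $\lambda(k,s)$ in every degree without that step.
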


\begin{proof}
\quad
\begin{enuma}
\item See \cite{Alesker11}, Theorem 5.4.1 (3).
\item Because $\FF^2=-\id$ on $\Val^{1,\infty}$, we may assume $k\leq\lfloor\frac n2\rfloor$. First, the case $k=0$ is trivial since $\Val_0^{1,\infty}=\{0\}$. Second, for $k=1$ the claim follows at once from \eqref{eq:FFmukf} and Lemma \ref{lem:spherical1}. Finally, assume $k\geq 2$. Take any $\phi\in\Real\Val_1^{1,\infty}$ and $\psi\in\Real\Val_{k-1}^{0,\infty}$. By what has been already shown, $\FF\phi$ is purely imaginary while $\FF\psi$ is real. Consequently, $\FF(\phi\cdot\psi)=\FF\phi*\FF\psi$ is purely imaginary by \eqref{eq:convDef}. The irreducibility theorem (Theorem \ref{thm:IT}) holds verbatim for $\Real\Val_l^{s,\infty}$ as the complexification of a dense invariant real subspace is a dense invariant subspace of $\Val_l^{s,\infty}$. Hence
\begin{align*}
\linspan_\RR\left\{\phi\cdot\psi\mid \phi\in\Real\Val_1^{1,\infty},\psi\in\Real\Val_{k-1}^{0,\infty}\right\}\subset\Real\Val_k^{1,\infty},
\end{align*}
which is $\GL(n,\RR)$ invariant by \eqref{eq:APequivariance}, and obviously non-trivial (see Lemma \ref{lem:spherical1} and Corollary \ref{cor:HLproduct}), is dense. Now the claim of the lemma easily follows from $\CC$-linearity and continuity of $\FF$.
\end{enuma}
\end{proof}

\begin{proposition}
\label{pro:Qtilda}
For any $\phi\in\Val_k^{\infty}$, one has
\begin{align}
\wt Q({\FF\phi},\FF\phi)=Q({\phi_0},\phi_0)-Q({\phi_1},\phi_1),
\end{align}
where $\phi_0$ and $\phi_1$ are the even and odd part of $\phi$, i.e., $\phi=\phi_0+\phi_1$ and $\phi_s\in\Val_k^{\infty,s}$, $s=0,1$.
\end{proposition}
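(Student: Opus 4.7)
The plan is to expand $\tilde Q(\FF\phi,\FF\phi)$ and push everything through the Alesker-Fourier transform, using that $\FF$ intertwines the two products and sends $\mu_1$ to $\mu_{n-1}$.

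First, I would decompose $\phi = \phi_0 + \phi_1$ with $\phi_s \in \Val_k^{s,\infty}$, so that $\FF\phi_s \in \Val_{n-k}^{s,\infty}$. The key preparatory step is to determine how complex conjugation interacts with $\FF$ on each parity component. Writing $\phi_s = a + \sqrt{-1}\,b$ with $a,b \in \Real\Val_k^{s,\infty}$, Lemma \ref{lem:Freal} gives that $\FF a, \FF b$ are real when $s=0$ and purely imaginary when $s=1$. A direct computation using $\CC$-linearity of $\FF$ then yields
\begin{align}
\overline{\FF\phi_s} = (-1)^s \FF\bar\phi_s, \quad s=0,1.
\end{align}

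Next, I would insert this into the definition of $\tilde Q$ and use $\FF(\mu_1^{n-2k}) = \mu_{n-1}^{*(n-2k)}$ together with Theorem \ref{thm:AFT}(a) to rewrite
\begin{align}
\tilde Q(\FF\phi,\FF\phi)
&= \bigl(\FF\bar\phi_0 - \FF\bar\phi_1\bigr) * \bigl(\FF\phi_0 + \FF\phi_1\bigr) * \FF(\mu_1^{n-2k}) \\
&= \FF\!\left[\,(\bar\phi_0 - \bar\phi_1)\cdot(\phi_0 + \phi_1)\cdot\mu_1^{n-2k}\,\right].
\end{align}
Expanding the bracket produces four terms; the two diagonal ones give $Q(\phi_0,\phi_0)$ and $-Q(\phi_1,\phi_1)$ after noting that on $\Val_n^\infty \cong \CC \cong \Val_0^\infty$ the Alesker-Fourier transform reduces to the identity (since $\FF\chi=\vol_n$).

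The two cross terms $\bar\phi_0\cdot\phi_1\cdot\mu_1^{n-2k}$ and $\bar\phi_1\cdot\phi_0\cdot\mu_1^{n-2k}$ must be shown to vanish. This is the one place one has to pay attention, but it is immediate from the parity grading in Theorem \ref{thm:AP}(c): both lie in $\Val_n^{1,\infty}$, which is trivial because $\Val_n^\infty = \linspan\{\vol_n\}$ and $\vol_n$ is even. Combining these observations gives exactly
\begin{align}
\tilde Q(\FF\phi,\FF\phi) = Q(\phi_0,\phi_0) - Q(\phi_1,\phi_1),
\end{align}
as required. The only mildly subtle point is the conjugation identity in the preparatory step; everything else is bookkeeping with the gradings and Theorem \ref{thm:AFT}.
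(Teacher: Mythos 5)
Your proof is correct and follows essentially the same route as the paper: both arguments rest on Lemma \ref{lem:Freal} and on transporting the whole pairing through $\FF$ using Theorem \ref{thm:AFT}. Your conjugation identity $\overline{\FF\phi_s}=(-1)^s\FF\bar\phi_s$ is just a cleaner packaging of the paper's four identities for $\Real\FF\phi_s$ and $\Imag\FF\phi_s$, and your parity argument for the vanishing of the cross terms matches the paper's opening observation that the even and odd parts are orthogonal for $Q$ and $\wt Q$.
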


\begin{proof}
Observe that $\Val_k^{0,\infty}$ and $\Val_k^{1,\infty}$ are orthogonal with respect to $Q$, and similarly $\Val_{n-k}^{0,\infty}$ and $\Val_{n-k}^{1,\infty}$ with respect to $\wt Q$, as $\mu_l\in\Val_l^{0,\infty}$ is even for all $0\leq l\leq n$. Further, it is an immediate consequence of Lemma \ref{lem:Freal} that
\begin{align*}
\Real\FF\phi_0&=\FF\Real\phi_0,\\
\Imag\FF\phi_0&=\FF\Imag\phi_0,
\end{align*}
and
\begin{align*}
\Real\FF\phi_1&=\sqrt{-1}\,\FF\Imag\phi_1,\\
\Imag\FF\phi_1&=-\sqrt{-1}\,\FF\Real\phi_1.
\end{align*}
From these facts and Theorem \ref{thm:AFT} one infers
\begin{align*}
\wt Q({\FF\phi},\FF\phi)&=\wt Q({\FF\phi_0},\FF\phi_0)+\wt Q({\FF\phi_1},\FF\phi_1)\\
&=\left((\Real\FF\phi_0)^{*2}+(\Imag\FF\phi_0)^{*2}+(\Real\FF\phi_1)^{*2}+(\Imag\FF\phi_1)^{*2}\right)*\mu_{n-1}^{*(n-2k)}\\
&=\left((\FF\Real\phi_0)^{*2}+(\FF\Imag\phi_0)^{*2}-(\FF\Imag\phi_1)^{*2}-(\FF\Real\phi_1)^{*2}\right)*\mu_{n-1}^{*(n-2k)}\\
&=\FF^{-1}\bigg[\left((\FF\Real\phi_0)^{*2}+(\FF\Imag\phi_0)^{*2}-(\FF\Imag\phi_1)^{*2}-(\FF\Real\phi_1)^{*2}\right)*\mu_{n-1}^{*(n-2k)}\bigg]\\
&=\left((\Real\phi_0)^{2}+(\Imag\phi_0)^{2}-(\Imag\phi_1)^{2}-(\Real\phi_1)^{2}\right)\cdot\mu_{1}^{n-2k}\\
&=Q({\phi_0},\phi_0)-Q({\phi_1},\phi_1),
\end{align*}
where $\Val_0^\infty$ and $\Val_n^\infty$ are again identified with $\CC$ via $\chi$ and $\vol_n$, respectively.
\end{proof}

Proposition \ref{pro:Qtilda} together with \eqref{eq:cpVal} implies at once that Conjecture \ref{con:HR} is indeed equivalent to Conjecture \ref{con:HRc}. Along the same lines, an equivalent formulation of Theorems \ref{thm:HReven} and \ref{thm:HR1}, respectively, is the following:
\begin{theorem}
\label{thm:HRc}
Conjecture \ref{con:HRc} is true under each of the following additional assumptions:
\begin{enuma}
\item $\phi$ is even,
\item $k=1$.
\end{enuma}
\end{theorem}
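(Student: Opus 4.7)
The plan is to deduce Theorem \ref{thm:HRc} as a direct consequence of Theorems \ref{thm:HReven} and \ref{thm:HR1} by transporting the problem across the Alesker-Fourier transform. The bridge is already in place: equation \eqref{eq:cpVal} identifies $\cpVal_{n-k}^s=\FF\pVal_k^s$, and Proposition \ref{pro:Qtilda} relates $\wt Q$ to $Q$ through the parity decomposition.

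Concretely, I would begin by fixing a non-zero co-primitive valuation $\phi\in\cpVal_{n-k}$ and writing $\phi=\FF\psi$ with $\psi\in\pVal_k$ (possible by \eqref{eq:cpVal}, since $\FF$ intertwines $\cdot$ and $*$ and sends $\mu_1$ to $\mu_{n-1}$ by Theorem \ref{thm:AFT}(a),(d)). Decompose $\psi=\psi_0+\psi_1$ into its even and odd parts. A small but crucial point to verify is that primitivity is compatible with this parity splitting: since $\mu_1$ is even, Theorem \ref{thm:AP}(c) implies that $\psi_0\cdot\mu_1^{n-2k+1}$ and $\psi_1\cdot\mu_1^{n-2k+1}$ live in different parity subspaces of $\Val_{n-k+1}^\infty$, hence vanish separately. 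Thus $\psi_s\in\pVal_k^s$ for $s=0,1$. Applying Proposition \ref{pro:Qtilda} yields
\begin{align}
\wt Q(\phi,\phi)=Q(\psi_0,\psi_0)-Q(\psi_1,\psi_1).
\end{align}

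For part (a), if $\phi$ is even, then by Theorem \ref{thm:AFT}(b) so is $\psi=\FF^{-1}\phi\cdot(-1)^0$ (parity is preserved by $\FF$), hence $\psi_1=0$ and $\psi=\psi_0\neq 0$. Theorem \ref{thm:HReven} applied to $\psi\in\pVal_k^0$ gives $(-1)^kQ(\psi,\psi)>0$, which together with the display above yields $(-1)^k\wt Q(\phi,\phi)>0$, proving (a).

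For part (b), take $k=1$, so that $\psi\in\pVal_1=\pVal_1^0\oplus\pVal_1^1$ and at least one of $\psi_0,\psi_1$ is non-zero (because $\phi\neq0$ and $\FF$ is an isomorphism). Theorem \ref{thm:HR1} gives $(-1)^{1+s}Q(\psi_s,\psi_s)>0$ whenever $\psi_s\neq 0$, so $Q(\psi_0,\psi_0)\leq0$ and $Q(\psi_1,\psi_1)\geq 0$ with strict sign in at least one instance. Consequently $Q(\psi_0,\psi_0)-Q(\psi_1,\psi_1)<0$, i.e.\ $(-1)^1\wt Q(\phi,\phi)>0$, proving (b). There is no real obstacle here beyond checking the parity compatibility of primitivity noted above; the sign asymmetry in Theorem \ref{thm:HR1} is precisely what Proposition \ref{pro:Qtilda} absorbs, so the two parts of Theorem \ref{thm:HRc} both come out uniformly as $(-1)^k\wt Q(\phi,\phi)>0$.
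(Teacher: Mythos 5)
Your argument is correct and is exactly the paper's own route: the paper deduces Theorem \ref{thm:HRc} from Theorems \ref{thm:HReven} and \ref{thm:HR1} by combining \eqref{eq:cpVal} with Proposition \ref{pro:Qtilda}, just as you do. Your explicit check that primitivity splits across the parity decomposition, and your bookkeeping of the signs in part (b), fill in details the paper leaves implicit, but the proof is the same.
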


\section{An isoperimetric-type inequality}

In this section, an application of one of the proven cases of the Hodge-Riemann relations is discussed: It turns out that the purely algebraic statement of Theorem \ref{thm:HRc} (b) can be used to deduce an inequality of geometric type, namely, the first one of the isoperimetric inequalities \eqref{eq:II}. A special case (assuming $n=2$ and $K=-K$) of the argument we use was communicated to us by S. Alesker.

\begin{corollary}
\label{cor:iso12}
For any $K\in\calK_+^\infty$, one has
\begin{align}
\label{eq:iso12}
V(K,D[n-1])^2\geq V(K,K,D[n-2])\cdot\vol_n(D).
\end{align}
\end{corollary}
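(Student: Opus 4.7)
The proof plan is as follows. Set $\phi := V(\Cdot[n-1], K) \in \Val_{n-1}^\infty$; this is well-defined and real-valued since $K \in \calK_+^\infty$. Applying the Alesker-Fourier transform to Corollary \ref{cor:LD} in degree $k=1$ yields the Lefschetz decomposition for the convolution,
\begin{align*}
\Val_{n-1}^\infty = \linspan\{\mu_{n-1}\} \oplus \cpVal_{n-1},
\end{align*}
so I may write $\phi = \alpha\,\mu_{n-1} + \phi_0$ with $\alpha \in \CC$ and $\phi_0 \in \cpVal_{n-1}$. Convolving both sides with $\mu_{n-1}^{*(n-1)}$ and using the co-primitivity condition \eqref{eq:co-primitive} on $\phi_0$ gives $\phi * \mu_{n-1}^{*(n-1)} = \alpha\,\mu_{n-1}^{*n}$; both sides are real-valued, so $\alpha \in \RR$ and consequently $\phi_0$ is real-valued.

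Next, I invoke Theorem \ref{thm:HRc}(b). Splitting $\phi_0 = \phi_{0,e} + \phi_{0,o}$ into even and odd parts and observing that the cross term $\phi_{0,e} * \phi_{0,o} * \mu_{n-1}^{*(n-2)}$ is an odd $0$-homogeneous valuation and hence vanishes (as $\Val_0^\infty \cong \CC$ contains only even elements), I obtain
\begin{align*}
\wt Q(\phi_0,\phi_0) = \wt Q(\phi_{0,e},\phi_{0,e}) + \wt Q(\phi_{0,o},\phi_{0,o}) \leq 0
\end{align*}
by Theorem \ref{thm:HRc}(b) applied separately to each parity. Expanding $\wt Q(\phi_0,\phi_0) = (\phi - \alpha\,\mu_{n-1})^{*2} * \mu_{n-1}^{*(n-2)}$ and eliminating $\alpha$ by means of $\phi * \mu_{n-1}^{*(n-1)} = \alpha\,\mu_{n-1}^{*n}$ produces the Cauchy-Schwarz-type estimate
\begin{align*}
\bigl(\phi * \mu_{n-1}^{*(n-1)}\bigr)^2 \;\geq\; \bigl(\phi^{*2} * \mu_{n-1}^{*(n-2)}\bigr) \cdot \mu_{n-1}^{*n}.
\end{align*}

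Finally, I translate this algebraic inequality into mixed volumes via Lemma \ref{lem:conv}(a) together with \eqref{eq:IV}: both sides become positive multiples of $V(K,D[n-1])^2$ and $V(K,K,D[n-2])\,\vol_n(D)$, respectively. The remaining task, and the only real bookkeeping obstacle, is to verify that the two positive constants produced by the iterated convolutions coincide. I sidestep an explicit computation by specializing to $K = D$: then $\phi$ is a positive scalar multiple of $\mu_{n-1}$, whence $\phi_0 = 0$ and the inequality above becomes an equality, while all three mixed volumes equal $\vol_n(D)$. This forces the two constants to agree, and \eqref{eq:iso12} follows in its sharp form.
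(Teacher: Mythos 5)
Your proposal is correct and follows essentially the same route as the paper: your co-primitive part $\phi_0$ is exactly the valuation $\eta=V(\Cdot[n-1],K)-\frac{V(K,D[n-1])}{\vol_n(D)}V(\Cdot[n-1],D)$ that the paper writes down explicitly, and the key step in both cases is Theorem \ref{thm:HRc}(b) applied to it, followed by expansion via \eqref{eq:MVconv}. The only (harmless) differences are that the parity splitting is unnecessary—Theorem \ref{thm:HRc}(b) applies to all co-primitive valuations in co-degree $1$—and that you fix the constants by specializing to $K=D$ instead of tracking them through the computation.
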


\begin{remark}
It follows at once from \eqref{eq:IV} that \eqref{eq:iso12} is indeed equivalent to
\begin{align}
\left(\frac{\mu_1(K)}{\mu_1(D)}\right)^2\geq\frac{\mu_2(K)}{\mu_2(D)}.
\end{align}
In another terminology, \eqref{eq:iso12} is a special case of {\it Minkowski’s second inequality} (see \cite{Schneider2014}, p. 382), and obviously a special case of the Aleksandrov-Fenchel inequality \eqref{eq:AF}.

\end{remark}

\begin{proof}
Consider the following valuation:
\begin{align*}
\eta=V(\Cdot[n-1],K)-\frac{V(K,D[n-1])}{\vol_n(D)}\, V(\Cdot[n-1],D).
\end{align*}
Clearly, $\eta\in\Val_{n-1}^\infty$. Further, $\eta\in\cpVal_{n-1}$ in fact since it follows from \eqref{eq:IV} and \eqref{eq:MVconv} that
\begin{align*}
\eta*\mu_{n-1}^{*(n-1)}&=c\,\eta*V(\Cdot,D[n-1])=\tilde c\left[V(K,D[n-1])-\frac{V(K,D[n-1])}{\vol_n(D)}\, V(D[n])\right]=0,
\end{align*}
for some $c,\tilde c>0$.
Consequently, according to Theorem \ref{thm:HRc} (b) and formulas \eqref{eq:IV} and \eqref{eq:MVconv}, there are $b,\tilde b, \hat b>0$ such that 
\begin{align*}
0&\geq \wt Q(\eta,\eta)\\
&=\eta*\eta*\mu_{n-1}^{*(n-2)}\\
&=b\,\eta*\eta*V(\Cdot[2],D[n-2]) \\
&=\tilde b\left[V(\Cdot[n-2],K[2])-2\frac{V(K,D[n-1])}{\vol_n(D)}\,V(\Cdot[n-2],K,D)\right.\\
&\qquad\qquad\left.+\frac{V(K,D[n-1])^2}{\vol_n(D)^2}\,V(\Cdot[n-2],D[2])\right]*V(\Cdot[2],D[n-2])\\
&=\hat b\left[V(K[2],D[n-2])-2\frac{V(K,D[n-1])}{\vol_n(D)}\,V(K,D[n-1])+\frac{V(K,D[n-1])^2}{\vol_n(D)^2}\,V(D[n])\right]\\
&=\hat b\left[V(K[2],D[n-2])-\frac{V(K,D[n-1])^2}{\vol_n(D)}\right],
\end{align*}
which is clearly equivalent to \eqref{eq:iso12}.
\end{proof}

\section{Mixed Hodge-Riemann relations and the Aleksandrov–Fenchel inequality}

We conclude by generalizing the arguments of the previous section to the conjectured mixed version of the Hodge-Riemann relations. In particular, this allows us to show that Conjecture \ref{con:mixed} yields the Aleksandrov-Fenchel inequality as a special case.

Assume, at first, $k=0$. It follows at once from \eqref{eq:MVconv} that in this case Conjecture \ref{con:mixed} is equivalent to the statement that for any $K_1,\dots,K_n\in\K_+^\infty$, one has
\begin{align}
V(K_1,\dots,K_n)>0,
\end{align}
which is a well-known, yet non-trivial fact (see \cite{Schneider2014}, Theorems 5.1.7 and 5.1.8).

More interesting, however, is the next case:

\begin{corollary}
Conjecture \ref{con:mixed} for $k=1$ implies the Aleksandrov-Fenchel inequality \eqref{eq:AF} for convex bodies from the class $\calK_+^\infty$.
\end{corollary}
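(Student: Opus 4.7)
The plan is to mirror the argument of Corollary~\ref{cor:iso12}, replacing the Euclidean ball and the intrinsic volumes used there as reference data by the specific bodies $K_3,\dots,K_n$ and the corresponding mixed volumes prescribed by Conjecture~\ref{con:mixed}. Fix $K_1,\dots,K_n\in\calK_+^\infty$ and, noting that all mixed volumes of such bodies are strictly positive (Theorem~5.1.8 in \cite{Schneider2014}), I would introduce the real-valued valuation
\begin{align*}
\eta = V(\Cdot[n-1],K_1)-\alpha\,V(\Cdot[n-1],K_2),\qquad\alpha=\frac{V(K_1,K_2,K_3,\dots,K_n)}{V(K_2,K_2,K_3,\dots,K_n)}.
\end{align*}
Then $\eta\in\Val_{n-1}^\infty$ and $\b\eta=\eta$.

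I would first verify that $\eta$ meets the kernel hypothesis of Conjecture~\ref{con:mixed}(b) with $k=1$: by repeated application of \eqref{eq:MVconv} the iterated convolution $\eta*V(\Cdot[n-1],K_2)*V(\Cdot[n-1],K_3)*\cdots*V(\Cdot[n-1],K_n)$ collapses to a strictly positive constant multiple of $V(K_1,K_2,K_3,\dots,K_n)-\alpha\,V(K_2,K_2,K_3,\dots,K_n)$, which vanishes by the choice of $\alpha$. Assuming $\eta\neq 0$, Conjecture~\ref{con:mixed}(b) then yields
\begin{align*}
\b\eta*\eta*V(\Cdot[n-1],K_3)*\cdots*V(\Cdot[n-1],K_n)<0.
\end{align*}
Expanding $\b\eta*\eta$ and applying \eqref{eq:MVconv} once more shows that the left-hand side equals a positive constant times
\begin{align*}
V(K_1,K_1,K_3,\dots,K_n)-2\alpha\,V(K_1,K_2,K_3,\dots,K_n)+\alpha^2\,V(K_2,K_2,K_3,\dots,K_n).
\end{align*}
Substituting the value of $\alpha$ collapses this to $V(K_1,K_1,K_3,\dots,K_n)-V(K_1,K_2,K_3,\dots,K_n)^2/V(K_2,K_2,K_3,\dots,K_n)$, whose negativity is precisely (the strict form of) the Aleksandrov-Fenchel inequality~\eqref{eq:AF}.

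It remains to treat the degenerate case $\eta=0$: here $V(\Cdot[n-1],K_1)=\alpha\,V(\Cdot[n-1],K_2)$, and convolving both sides with $V(\Cdot[n-1],K_1)*V(\Cdot[n-1],K_3)*\cdots*V(\Cdot[n-1],K_n)$ and invoking \eqref{eq:MVconv} gives $V(K_1,K_1,K_3,\dots,K_n)=\alpha\,V(K_1,K_2,K_3,\dots,K_n)$, which together with the definition of $\alpha$ yields equality in \eqref{eq:AF}. The entire argument is a direct generalization of Corollary~\ref{cor:iso12}, so there is no real conceptual obstacle; the only point requiring some care is keeping track of all the multiplicative constants produced by the iterated applications of \eqref{eq:MVconv} and confirming they are strictly positive, which is exactly what Lemma~\ref{lem:conv}(a) guarantees and which is essential so that the direction of the inequality is preserved.
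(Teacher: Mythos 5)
Your proposal is correct and follows essentially the same route as the paper: the same auxiliary valuation $V(\Cdot[n-1],K_1)-\alpha\,V(\Cdot[n-1],K_2)$ with the same choice of $\alpha$, the same verification of the kernel hypothesis via \eqref{eq:MVconv}, and the same expansion yielding the Aleksandrov-Fenchel inequality. Your explicit treatment of the degenerate case $\eta=0$ is a minor point of added care that the paper absorbs by simply writing the conclusion as a non-strict inequality $0\geq\xi*\xi*V(\Cdot[n-1],K_3)*\cdots*V(\Cdot[n-1],K_n)$.
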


\begin{remark}
The full generality of the Aleksandrov-Fenchel inequality, namely, the extension of its validity to the class $\K$, is then achieved by the standard limiting argument (cf. Aleksandrov's second proof \cite{Aleksandrov38}).
\end{remark}

\begin{proof}
The same argumentation is valid here as that of the proof of Corollary \ref{cor:iso12}. Namely, take any $K_1,\dots,K_n\in\calK_+^\infty$ and consider the following `mixed' version of the valuation $\eta$ examined therein:
\begin{align*}
\xi=V(\Cdot[n-1],K_1)-\frac{V(K_1,K_2,K_3,\dots,K_n)}{V(K_2,K_2,K_3,\dots,K_n)}\,V(\Cdot[n-1],K_2)\in\Val_{n-1}^\infty.
\end{align*}
Recall that $V(K_2,K_2,K_3,\dots,K_n)>0$ and $\xi$ is thus well defined. Using \eqref{eq:MVconv}, we easily compute
\begin{align*}
\xi*V(\Cdot[n-1],K_2)*\cdots*V(\Cdot[n-1],K_n)=0.
\end{align*}
Hence, part (b) of Conjecture \ref{con:mixed} for $k=1$ implies
\begin{align}
\label{eq:geqzeta}
0\geq\xi*\xi*V(\Cdot[n-1],K_3)*\cdots*V(\Cdot[n-1],K_n).
\end{align}
By precisely the same considerations as above, i.e., using the formula \eqref{eq:MVconv} repeatedly, we find that the right-hand side of \eqref{eq:geqzeta} is a positive multiple of
\begin{align*}
V(K_1,K_1,K_3,\dots,K_n)-\frac{V(K_1,K_2,K_3,\dots,K_n)^2}{V(K_2,K_2,K_3,\dots,K_n)},
\end{align*}
and \eqref{eq:geqzeta} is therefore equivalent to \eqref{eq:AF}.
\end{proof}

\bibliography{bib/books,bib/papers,bib/theses}
\bibliographystyle{abbrv}

\end{document}